\begin{document}

\newtheorem{thm}{Theorem}[section]
\newtheorem{theorem}{Theorem}[section]
\newtheorem{lem}[thm]{Lemma}
\newtheorem{lemma}[thm]{Lemma}
\newtheorem{prop}[thm]{Proposition}
\newtheorem{proposition}[thm]{Proposition}
\newtheorem{corollary}[thm]{Corollary}
\newtheorem{definition}[thm]{Definition}
\newtheorem{remark}[thm]{Remark}
\newtheorem{conjecture}[theorem]{Conjecture}

\numberwithin{equation}{section}

\newcommand{\Z}{{\mathbb Z}} 
\newcommand{\Q}{{\mathbb Q}}
\newcommand{\R}{{\mathbb R}}
\newcommand{\C}{{\mathbb C}}
\newcommand{\N}{{\mathbb N}}
\newcommand{\FF}{{\mathbb F}}
\newcommand{\fq}{\mathbb{F}_q}
\newcommand{\X}{{\mathbb {X}}}
\newcommand{\rmk}[1]{\footnote{{\bf Comment:} #1}}

\newcommand{\bfA}{{\boldsymbol{A}}}
\newcommand{\bfY}{{\boldsymbol{Y}}}
\newcommand{\bfX}{{\boldsymbol{X}}}
\newcommand{\bfZ}{{\boldsymbol{Z}}}
\newcommand{\bfa}{{\boldsymbol{a}}}
\newcommand{\bfy}{{\boldsymbol{y}}}
\newcommand{\bfx}{{\boldsymbol{x}}}
\newcommand{\bfz}{{\boldsymbol{z}}}
\newcommand{\F}{\mathcal{F}}
\newcommand{\Gal}{\mathrm{Gal}}
\newcommand{\Fr}{\mathrm{Fr}}
\newcommand{\Hom}{\mathrm{Hom}}
\newcommand{\GL}{\mathrm{GL}}

\renewcommand{\mod}{\;\operatorname{mod}}
\newcommand{\ord}{\operatorname{ord}}
\newcommand{\TT}{\mathbb{T}}
\renewcommand{\i}{{\mathrm{i}}}
\renewcommand{\d}{{\mathrm{d}}}
\renewcommand{\^}{\widehat}
\newcommand{\HH}{\mathbb H}
\newcommand{\Vol}{\operatorname{vol}}
\newcommand{\area}{\operatorname{area}}
\newcommand{\tr}{\operatorname{tr}}
\newcommand{\norm}{\mathcal N} 
\newcommand{\intinf}{\int_{-\infty}^\infty}
\newcommand{\ave}[1]{\left\langle#1\right\rangle} 
\newcommand{\Var}{\operatorname{Var}}
\newcommand{\Prob}{\operatorname{Prob}}
\newcommand{\sym}{\operatorname{Sym}}
\newcommand{\disc}{\operatorname{disc}}
\newcommand{\CA}{{\mathcal C}_A}
\newcommand{\cond}{\operatorname{cond}} 
\newcommand{\lcm}{\operatorname{lcm}}
\newcommand{\Kl}{\operatorname{Kl}} 
\newcommand{\leg}[2]{\left( \frac{#1}{#2} \right)}  
\newcommand{\Li}{\operatorname{Li}}

\newcommand{\sumstar}{\sideset \and^{*} \to \sum}

\newcommand{\LL}{\mathcal L} 
\newcommand{\sumf}{\sum^\flat}
\newcommand{\Hgev}{\mathcal H_{2g+2,q}}
\newcommand{\USp}{\operatorname{USp}}
\newcommand{\conv}{*}
\newcommand{\dist} {\operatorname{dist}}
\newcommand{\CF}{c_0} 
\newcommand{\kerp}{\mathcal K}

\newcommand{\Cov}{\operatorname{cov}}
\newcommand{\Sym}{\operatorname{Sym}}

\newcommand{\ES}{\mathcal S} 
\newcommand{\EN}{\mathcal N} 
\newcommand{\EM}{\mathcal M} 
\newcommand{\Sc}{\operatorname{Sc}} 
\newcommand{\Ht}{\operatorname{Ht}}

\newcommand{\E}{\operatorname{E}} 
\newcommand{\sign}{\operatorname{sign}} 

\newcommand{\divid}{d} 
\newcommand{\A}{{\mathbb{A}}}
\newcommand{\h}{\mathbb{H}_{2g+1}}
\newcommand{\p}{\mathbb{P}_{2g+1}}
\newcommand{\f}{\mathbb{F}_{q}[T]}
\newcommand{\z}{\zeta_{\A}}
\newcommand{\lo}{\log_q}
\newcommand{\x}{\chi}
\newcommand{\xx}{\mathcal{X}}
\newcommand{\lL}{\mathcal{L}}
\newcommand{\e}{\varepsilon}
\newcommand{\w}{\omega}
\newcommand{\pp}{\text{\textbf{P}}}	
\newcommand{\M}{\mathcal{M}}

\title[HYBRID EULER-HADAMARD PRODUCT]
{Hybrid Euler-Hadamard Product for Dirichlet $L$-functions with Prime conductors over Function Fields}

\author{Julio Andrade}
\address{Department of Mathematics, University of Exeter, Exeter, EX4 4QF, United Kingdom}
\email{j.c.andrade@exeter.ac.uk}

\author{ASMAA sHAMESALDEEN}
\address{Department of Mathematics, University of Exeter, Exeter, EX4 4QF, United Kingdom}
\email{as1029@exeter.ac.uk}

\subjclass[2010]{Primary 11M38; Secondary 11M06, 11G20, 11M50, 14G10}
\keywords{Mean values of $L$--functions; finite fields; function fields; Euler-Hadamard Product}

\begin{abstract}
In this paper we extend the hybrid Euler-Hadamard product model for quadratic Dirichlet $L$-functions associated to irreducible polynomials over function fields. We also establish an asymptotic formula for the first twisted moment in this family of $L$-functions and then we provide further evidence for the conjectural asymptotic formulas for its moments. 
\end{abstract}
\date{\today}

\maketitle


\section{Introduction}\label{into}

There has been substantial and sustained research into moments of families of $L$-functions on the critical line. Much of this interest is generated by the presence of numerous applications, but  moments  are  also  studied  for  their  own  intrinsic  interest and for being a well-known challenging problem.

For the family of quadratic Dirichlet $L$-functions $L(s,\x_d)$, where $\x_d$ is real primitive Dirichlet character modulo $d$, Jutila \cite{Jutila} established the first and second moments and following improvements on the error terms were obtained by Goldfeld and Hoffstein \cite{Gold Hoff}, Soundararajan \cite{Sond} and Young \cite{Young}. In a breakthrough result, Soundararajan in \cite{Sond} obtained an asymptotic formula for the third moment. In a recent paper Diaconu and Whitehead \cite{DW} established a smooth asymptotic formula for the third moment and proved the existence of a secondary term of size $x^{3/4}$. Under the Generalized Riemann Hypothesis, Soundararajan and Young \cite{Sand Young} claimed that they could establish an asymptotic formula for the fourth moment. Recently, Shen \cite{Shen} obtained an asymptotic formula for the fourth moment of quadratic Dirichlet $L$-function at $s=1/2$ and a precise lower bound unconditionally. 

No other asymptotic formulas are known for the moments of quadratic Dirichlet $L$-functions at the centre of the critical strip. However, using the analogy with random matrix theory, Keating and Snaith \cite{Keating Snaith} have put forward a conjecture for the leading order asymptotic for all moments of quadratic Dirichlet $L$-function which agrees with the previous results. Conrey, Farmer, Keating, Rubinstein and Snaith introduced a different method, known as the recipe \cite{CFKRS}, and they were able to give conjectures for the moments beyond the leading order asymptotic to include all the principal lower order terms. Using different techniques, known as multiple Dirichlet series, Diaconu, Goldfeld and Hoffstein \cite{DGH} also produced conjectures for the moments of $L$-functions and their method also predicted the existence of many lower order terms for higher moments for this family of $L$-functions.

A similar problem was investigated by Goldfeld and Viola \cite{Goldfeld Viola} where they have conjectured an asymptotic formula for the moments of quadratic Dirichlet $L$-functions associated to primes $p\equiv 3 \mod 4$. In this context, Jutila \cite{Jutila} established an asymptotic formula for the first moment of this family of $L$-functions. Recently, under Generalized Riemann Hypothesis, Baluyot and Pratt \cite{BP} obtained the
leading order term in the asymptotic for the second moment in this family. 

In the function field setting, for the family of quadratic Dirichlet $L$-functions $L(s,\x_D)$, where $D$ is monic, square-free polynomial in $\f$, Hoffstein and Rosen \cite{H&R} computed the first moment of this family. Following Jutila's idea \cite{Jutila}, Andrade and Keating \cite{a&kmeanvalue} obtained an asymptotic formula for the first moment when $q$ is fixed and $q\equiv 1 \mod 4$, with improvements on secondary main term and a better error term by Florea \cite{Florea1}. The second, third and fourth moments were computed by Florea \cite{Florea2,Florea3}. Moreover, Diaconu \cite{Diaconu}  proved the existence of a secondary term in the asymptotic formula of the third moment of quadratic Dirichlet $L$-functions. In another paper, Andrade and Keating \cite{a&kConInMo} adapted the recipe of \cite{CFKRS} and \cite{Conr-Far-Zir} to the function field setting and conjectured asymptotic formulas for the
integral moments and ratios of the family of quadratic Dirichlet $L$-functions in function fields.

In this paper, we consider the family of quadratic Dirichlet $L$-functions associated to irreducible polynomials in $\mathbb{F}_q[T]$. Denote by $\p$ the space of monic, irreducible polynomials of degree $2g+1$ over $\f$. We are interested in studying the asymptotic behaviour for the $k^{th}$ moment,
\begin{equation*}
I_k(g)=\frac{1}{|\p|} \sum_{P\in\p} L\left(\tfrac{1}{2},\x_P\right)^k,
\end{equation*}
as $g\to\infty$ and $q$ is fixed and $\x_P(f)$ is given by the Legendre symbol in $\f$.

Andrade and Keating \cite{a&kprimemean} established an asymptotic formulas for $I_k(g)$ when $k=1,2$ and in a recent paper Bui and Florea \cite{BF2} improved the error term in the formula for $I_2(g)$ obtaining an extra lower order term. For other values of $k$, following the recipe to the function field setting \cite{a&kConInMo}, Andrade, Jung and Shamesaldeen \cite{Andrade Jung Shames} proposed a general formula for the integral moments of quadratic Dirichlet $L$-functions associated to $\x_P$ over function fields.   

\begin{conjecture}\label{julio and asmaa conjecture}
	For any $k\in\N$ we have 
	
	\begin{equation*}
	\frac{1}{\left|\p\right|} \sum_{P\in\p} L\left(\tfrac{1}{2},\x_P\right)^k \sim 2^{-\frac{k}{2}} \mathcal{A}_k \frac{G\left(k+1\right)\sqrt{\Gamma(k+1)}}{\sqrt{G\left(2k+1\right)\Gamma(2k+1)}} \left(2g\right)^{k(k+1)/2}
	\end{equation*}
	as $g\to\infty,$ where 
	
	\begin{equation}\label{A_k}
	\mathcal{A}_k = \prod_{\substack{Q \text{ monic} \\ \text{ irreducible} }} \left[ \left(1-\frac{1}{|Q|}\right)^{\frac{k(k+1)}{2}} \left(1+ \sum_{j=1}^\infty\frac{d_k\left(P^{2j}\right)}{|P|^j}\right)\right]
	\end{equation}
	with $d_k(f)$ being the $k^{th}$ divisor function, and $G(k)$ is the Barnes' $G$-function.
\end{conjecture}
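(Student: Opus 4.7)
The plan is to derive the conjecture via the hybrid Euler–Hadamard product model of Gonek, Hughes and Keating, adapted to the function field setting. The first step is to establish a splitting identity of the form
\[
L(\tfrac{1}{2},\x_P) \;=\; P_X(\tfrac{1}{2},\x_P)\, Z_X(\tfrac{1}{2},\x_P),
\]
in which $P_X$ is a partial Euler product truncated at monic irreducibles of degree at most $X$, and $Z_X$ is a partial Hadamard product over the zeros of $L(s,\x_P)$ localised at scale $X$. In the function field setting this identity can be made essentially exact by exploiting the polynomial nature of the completed $L$-function together with Weil's explicit formula, the parameter $X$ playing the role of a length scale that separates arithmetic and spectral contributions.

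Second, I would compute the $k$-th moment of $P_X(\tfrac{1}{2},\x_P)$ averaged over $P\in\p$. Expanding $\log P_X$ as a short Dirichlet series over prime powers, raising to the $k$-th power, and averaging via the orthogonality of the quadratic characters $\x_P$, one isolates a main term. The diagonal contributions (coming from even prime powers) produce precisely the arithmetic factor $\mathcal{A}_k$ of \eqref{A_k}, together with an explicit power of $X$ whose exponent is $k(k+1)/2$, reflecting the symplectic symmetry type of the family. The first twisted moment established elsewhere in this paper supplies the analytic input needed to carry out this evaluation rigorously.

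Third, I would model the spectral factor $Z_X(\tfrac{1}{2},\x_P)$ by the corresponding partial Hadamard product of the characteristic polynomial of a Haar-random matrix in $\USp(2g)$, the symmetry group attached to this family of quadratic characters. The full moments of characteristic polynomials in $\USp(2g)$ are known in closed form and yield the factor
\[
2^{-k/2}\,\frac{G(k+1)\sqrt{\Gamma(k+1)}}{\sqrt{G(2k+1)\Gamma(2k+1)}}\,(2g)^{k(k+1)/2}.
\]
Dividing out the corresponding random matrix expectation of the truncated Euler product leaves a complementary factor of order $X^{-k(k+1)/2}$ attached to $Z_X$.

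Finally, invoking the splitting conjecture that the arithmetic and spectral factors decouple asymptotically, one multiplies the two moment formulas; the $X$-dependences cancel and the stated asymptotic emerges. The principal obstacle is the splitting conjecture itself: even granted the hybrid identity, the joint distribution of $P_X$ and $Z_X$ is not directly accessible, and a rigorous verification of the cancellation of $X$-dependent terms would amount to proving the full moment asymptotics unconditionally. As in the number field setting, the best that can be achieved is a detailed heuristic derivation whose output is consistent with the known cases $k=1,2$ of Andrade–Keating and Bui–Florea, with the random matrix prediction of Keating–Snaith, and with the recipe-based formula of Andrade–Jung–Shamesaldeen.
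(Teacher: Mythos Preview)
Your outline matches the paper's heuristic derivation: the hybrid product identity (Theorem~\ref{Hybrid Euler-Hadamard Product}), the rigorous moment of $P_X$ (Theorem~\ref{partial Euler product}), the random matrix model for $Z_X$ (Conjecture~\ref{201}), and the Splitting Conjecture combine exactly as you describe to yield Conjecture~\ref{julio and asmaa conjecture}.

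Two misattributions are worth correcting. First, the factor $2^{-k/2}$ arises on the \emph{arithmetic} side, in the evaluation of $\langle P_X(\x_P)^k\rangle_{\p}$ (see the computation culminating in \eqref{box}), not from the symplectic random matrix moment; the latter contributes only the Barnes-$G$ ratio together with the power $(2g/e^\gamma X)^{k(k+1)/2}$, as in Conjecture~\ref{201}. Second, the twisted first moment (Theorem~\ref{1TWM}) is not the input for the $P_X$ moments --- those are handled directly via the character-sum bound \eqref{chi} over primes --- but is instead used to evaluate $\langle L(\tfrac12,\x_P)\, P_X(\x_P)^{-1}\rangle_{\p}$ and thereby verify the $Z_X$ prediction and the splitting at $k=1$ (Theorem~\ref{Z_X}).
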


\begin{remark}
	An equivalent form of $\mathcal{A}_k$ is 
	
	\begin{equation*}
	\mathcal{A}_k = \prod_{\substack{Q \text{ monic} \\ \text{ irreducible} }} \left(1-\frac{1}{|Q|}\right)^{\frac{k(k+1)}{2}} \left(\frac{1}{2} \left(1-\frac{1}{|P|^{1/2}}\right)^{-k} + \frac{1}{2} \left(1+\frac{1}{|P|^{1/2}}\right)^{-k}\right) 
	\end{equation*}
\end{remark}

Another method to produce conjectures for the moments of $L$-functions is through the hybrid Euler-Hadamard product. It was proven by Gonek, Hughes and Keating \cite{GHK}, using a smoothed form of the explicit formula of Bombieri and Hejhal \cite{Bom Hej}, that the value of Riemann zeta function at a height $t$ on the critical line can be approximated by a partial Euler product multiplied by a partial Hadamard product over the non-trivial zeros close to $1/2+it$. The value distribution of the partial Hadamard product is expected to be modelled by the characteristic polynomial of a large random unitary matrix, since it involves only local information about the zeros. Calculating the moments of the partial Euler product rigorously and making an assumption (which can be proved in certain cases) about the independence of the two products, Gonek, Hughes and Keating were able to reproduce the conjecture for the $2k^{th}$ moment of the Riemann zeta-function. Bui and Keating \cite{Bui Keating 1,Bui Keating 2} extended this approach to the moments of Dirichlet $L$-function $L(s,\x)$ at the centre of the critical strip $(s=1/2)$.

In the function field setting, Bui and Florea \cite{BF} gave further support for Andrade and Keating conjecture by using the idea of Gonek, Hughes and Keating and the Hybrid-Euller formula for $L(s,\x_D)$. They also evaluated the first three twisted moments of quadratic Dirichlet $L$-functions over function fields for the family of square-free monic polynomials. 

In this paper, we give further support for Conjecture \ref{julio and asmaa conjecture} and derive the first twisted moment of quadratic Dirichlet $L$-functions associated to $\x_P$ over function fields. The recent work of Bui and Florea {BF2} can be used to obtain the twisted second moment of this family and this is being investigated by the authors.


\section{Statement of Results}\label{statement of results}

In this paper we assume $q$ is fixed and $q\equiv 1(\mod 4)$. Let $\mathbb{F}_q$ be a finite field with $q$ elements, and $\A=\f$ be the polynomial ring over $\mathbb{F}_q$. We denote $\M$ to be the set of all monic polynomials in $\A$, $\M_n$ be the set of those of degree $n$ and $\M_{\leqslant n}$ of degree at most $n$. The monic irreducible polynomial over $\A$ will be denoted by $\mathbb{P}$, and the set of all monic irreducible polynomials of degree $n$ is denoted by $\mathbb{P}_n.$ For a polynomial $f\in\A$, we denote its norm by $|f|$ which is defined to be $q^{\deg(f)}$ if $f\neq 0$ and 0 otherwise. The von Mangoldt function is defined by 
\begin{equation*}
\Lambda(f) = \begin{cases}
\deg(P) & \text{ if }f=cP^i \text{ for some } c\in\mathbb{F}^\times_q \text{ and } i\geqslant 1\\
0 & \text{ otherwise.}
\end{cases}
\end{equation*} 

Note that, from the Polynomial Prime Theorem \cite[Theorem 2.2]{Rosen} we have
\begin{equation}\label{PPT}
|\p|\sim\frac{q^{2g+1}}{2g+1}.
\end{equation}
We define the expected value for any function $F$ on $\p$ by 
\begin{equation*}
\left<F\right>_{\p}:=\frac{1}{|\p|} \sum_{P\in\p}F(P).
\end{equation*}

The Euler-Hadamard product for the quadratic $L$-functions associated to $P$ is stated below.

\begin{theorem}\label{Hybrid Euler-Hadamard Product}
	Let $X$ be a positive real number and $u(x)$ be real, non-negative, $C^\infty$-function with mass 1 and compactly supported on $\left[q,q^{1+1/X}\right]$. Let 
	
	\begin{equation}\label{U}
	U(z)=\int_0^\infty u(t)E_1\left(z\log t\right) dt,
	\end{equation}
	where $E_1(z)$ is the exponential integral, $E_1(z)=\int_z^\infty e^{-t}/t dt.$ Then for $\mathfrak{R}(s)\geqslant 0$ we have 
	
	\begin{equation*}
	L(s,\x_P) = P_X(s,\x_P) Z_X(s,\x_P),
	\end{equation*}
	where
	
	\begin{equation*}
	P_X(s,\x_P)= \exp\left(\sum_{\substack{f \text{ monic} \\ \deg(f)\leqslant X}} \frac{\Lambda(f)\x_P(f)}{|f|^s \deg(f)}\right)
	\end{equation*}
	and
	
	\begin{equation}\label{Z_X(s)}
	Z_X(s,\x_P)=\exp\left(- \sum_\rho U\left((s-\rho)X\right)\right),
	\end{equation}
	where the sum is over all the zeros $\rho$ of $L(s,\x_P).$ 
\end{theorem}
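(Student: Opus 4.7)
The plan is to mimic the Gonek--Hughes--Keating argument in the function field setting, where $L(s,\chi_P)$ is a polynomial of degree $2g$ in $q^{-s}$ so that both the Euler product and the Hadamard product over zeros become entirely explicit. Writing $L(s,\chi_P)=\prod_{j=1}^{2g}(1-\alpha_j q^{-s})$ with $\alpha_j=q^{\rho_j}$ and $|\alpha_j|=\sqrt{q}$, the zeros of $L(s,\chi_P)$ in the $s$-plane form the periodic family $\rho=\rho_j+2\pi ik/\log q$ for $1\leqslant j\leqslant 2g$ and $k\in\Z$.

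The first step is to identify the tail of the Dirichlet series for $\log L$. Comparing the Euler-product expansion $\log L(s,\chi_P)=\sum_{n\geqslant 1}(q^{-ns}/n)\sum_{f\in\M_n}\chi_P(f)\Lambda(f)$, valid for $\Re(s)>1$, with the polynomial expansion $\log L(s,\chi_P)=-\sum_j\sum_n \alpha_j^n q^{-ns}/n$ forces the explicit-formula identity $\sum_{f\in\M_n}\chi_P(f)\Lambda(f)=-\sum_j \alpha_j^n$ for every $n\geqslant 1$. Splitting the Dirichlet series at $n=X$ then gives, for $\Re(s)>1/2$,
\begin{equation*}
\log L(s,\chi_P)-\log P_X(s,\chi_P)=-\sum_{j=1}^{2g}\sum_{n>X}\frac{q^{n(\rho_j-s)}}{n}.
\end{equation*}

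It remains to show that this tail equals $-\sum_\rho U((s-\rho)X)$. Substituting the representation $E_1(z)=\int_1^\infty e^{-zy}/y\,dy$ into the definition of $U$ and interchanging sums and integrals yields
\begin{equation*}
\sum_\rho U((s-\rho)X)=\int_0^\infty u(t)\int_1^\infty\frac{1}{y}\sum_{j=1}^{2g}t^{-(s-\rho_j)Xy}\sum_{k\in\Z}e^{2\pi ikXy\log t/\log q}\,dy\,dt.
\end{equation*}
Applying the Poisson summation identity $\sum_k e^{2\pi ik\theta}=\sum_n\delta(\theta-n)$ with $\theta=Xy\log t/\log q$ localises $y$ at $n\log q/(X\log t)$, and a direct computation shows the surviving contributions collapse to $q^{n(\rho_j-s)}/n$. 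The constraint $y\geqslant 1$, the support $\mathrm{supp}(u)\subset[q,q^{1+1/X}]$, and the mass condition $\int u=1$ conspire to cut the $n$-sum off exactly at $n>X$ with unit weight. Summing over $j$ recovers the desired identity and yields $L=P_X Z_X$ on $\Re(s)>1/2$, after which the equality extends to $\Re(s)\geqslant 0$ by analytic continuation since all three functions are entire in $s$.

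The main obstacle I expect is making the Poisson-summation step rigorous: the sum $\sum_k U((s-\rho_j-2\pi ik/\log q)X)$ is only conditionally convergent, so the formal interchange of sum and integral has to be justified. I would deal with this by first truncating $|k|\leqslant K$, carrying out the interchange in the truncated setting, and sending $K\to\infty$ using the decay $E_1(z)\sim e^{-z}/z$ for large $|z|$. A secondary subtlety is that the mass and support properties of $u$ produce the clean indicator $\mathbf{1}[n>X]$ only when $X$ is a positive integer, so a small boundary adjustment may be required in the non-integer case.
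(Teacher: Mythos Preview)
Your proposal is correct and follows essentially the same route the paper appeals to: the paper gives no argument of its own but simply cites \cite[Theorem~2.1]{BF}, and your sketch---using the factorisation $L(s,\chi_P)=\prod_j(1-\alpha_j q^{-s})$ to split $\log L$ at degree $X$ and then identifying the tail with $\sum_\rho U((s-\rho)X)$ via Poisson summation over the $2\pi i/\log q$-translates of the zeros---is exactly the function-field adaptation of the Gonek--Hughes--Keating method carried out there. One minor remark: your worry about conditional convergence is unnecessary, since the smoothness and compact support of $u$ force $U(z)$ to decay faster than any power of $|z|$ as $|\Im z|\to\infty$ (by repeated integration by parts in $\int u(t)t^{-z}/\log t\,dt$), so the zero-sum converges absolutely and the interchange of sum and integral is immediate.
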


\begin{remark}
	The proof of the Theorem \ref{Hybrid Euler-Hadamard Product} is similar to \cite[Theorem 2.1]{BF}.
\end{remark}

\begin{remark}
As discussed in \cite{GHK}, $P_X(s,\x_P)$ can be thought of as the Euler product for $L(s,\x_P)$ truncated to include polynomials of degree $\leqslant X$, and $Z_X(s,\x_P)$ can be thought of as the Hadamard product for $L(s,\x_P)$ truncated to include zeros within a distance $\lesssim 1/X$ from the point $s$. The parameter $X$ thus controls the relative contributions of the Euler and Hadamard products. Note that a similar hybrid product formula was developed independently by Andrade, Keating and Gonek in \cite{AGK} and Bui and Florea in \cite{BF}.
\end{remark}

In Section \ref{P_X} we compute the moments of $P_X(\x_P):=P_X(1/2,\x_P)$ and prove the following theorem. 

\begin{theorem}\label{partial Euler product}
	Let $0<c<2.$ Suppose $X\leqslant(2-c)\log g/\log q.$ Then for any $k\in\R$ we have 
	
	\begin{equation*}
	\left<P_X(\x_P)^k\right>_{\p} = 2^{-k/2} \mathcal{A}_k \left(e^\gamma X\right)^{k(k+1)/2}+ O_k\left(q^{-X/2}X^{k(k+1)/2-1}\right),
	\end{equation*} 	
	where $\mathcal{A}_k$ is defined as in Conjecture \ref{julio and asmaa conjecture}.
\end{theorem}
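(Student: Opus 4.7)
The plan is to expand $P_X(\x_P)^k$ as a Dirichlet-type sum indexed by squarefree polynomials, isolate the diagonal ($f=1$) main term, and bound the off-diagonal contribution using the Weil conjectures. For every irreducible $Q\neq P$ one has $\x_P(Q)\in\{\pm1\}$, so $\x_P(Q)^j$ depends only on the parity of $j$; splitting the exponent of $P_X$ accordingly,
\begin{equation*}
P_X(\x_P)^k=\prod_{\deg Q\leq X}\exp\bigl(kB_X(Q)+k\x_P(Q)A_X(Q)\bigr),
\end{equation*}
where $A_X(Q)$ and $B_X(Q)$ are the partial sums of $(j|Q|^{j/2})^{-1}$ over odd and even $j$ with $j\deg Q\leq X$, respectively. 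Since $\deg P=2g+1>X$, the prime $P$ itself lies outside the range $\deg Q\leq X$. Applying $\exp(\pm y)=\cosh y\pm\sinh y$ and expanding the product gives
\begin{equation*}
P_X(\x_P)^k = C\sum_{f}h(f)\,\x_P(f),\qquad C:=\prod_{\deg Q\leq X}e^{kB_X(Q)}\cosh(kA_X(Q)),
\end{equation*}
with $f$ running over squarefree monic polynomials whose prime factors have degree $\leq X$ and $h(f):=\prod_{Q\mid f}\tanh(kA_X(Q))$.

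Averaging over $P\in\p$, the $f=1$ term contributes the deterministic $C$. For squarefree $f\neq 1$, $f$ is a non-square, and quadratic reciprocity (valid since $q\equiv 1\pmod 4$) gives $\x_P(f)=\x_f(P)$ with $\x_f$ a non-trivial Dirichlet character modulo $f$; the Weil bound for $L(s,\x_f)$ then yields $|\langle\x_P(f)\rangle_\p|\ll(2g+1)\deg(f)\,q^{-g-1/2}$. Combined with the crude $|h(f)|\leq\prod_{Q\mid f}k|Q|^{-1/2}$ and the hypothesis $X\leq(2-c)\log g/\log q$, which forces $\deg f\leq O(q^X)\leq O(g^{2-c})$, the off-diagonal contribution is easily dominated by the stated error term.

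It remains to evaluate $C$. Taking logarithms,
\begin{equation*}
\log C=k\sum_{\deg Q\leq X}B_X(Q)+\sum_{\deg Q\leq X}\log\cosh(kA_X(Q)).
\end{equation*}
The leading part of the first sum arises from $j=2$ and equals $\tfrac{k}{2}\sum_{\deg Q\leq X/2}|Q|^{-1}$, while the leading part of the second comes from the expansion $\log\cosh(k|Q|^{-1/2})=k^2/(2|Q|)+O(|Q|^{-2})$ and gives $\tfrac{k^2}{2}\sum_{\deg Q\leq X}|Q|^{-1}$. Via the function-field Mertens formula $\sum_{\deg Q\leq Y}|Q|^{-1}=\log Y+\gamma+M_{\mathrm{ff}}+O(q^{-Y/2})$, these combine to $\tfrac{k(k+1)}{2}(\log X+\gamma)-\tfrac{k}{2}\log 2+\mathrm{const}$; the offset $-\tfrac{k}{2}\log 2=\log 2^{-k/2}$ arises precisely from the mismatch between the cut-offs $\deg Q\leq X/2$ (for the leading term of $B_X$) and $\deg Q\leq X$ (for $A_X$), and supplies the factor $2^{-k/2}$. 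The remaining higher-order corrections, compared termwise against the complete local factor $\tfrac12\bigl[(1-|Q|^{-1/2})^{-k}+(1+|Q|^{-1/2})^{-k}\bigr]$ displayed in the Remark following Conjecture~\ref{julio and asmaa conjecture}, assemble into $\log\mathcal{A}_k$ in the limit. The main obstacle is organising these corrections uniformly in $\deg Q$, controlling the Mertens error, and absorbing the truncation discrepancies so that every remainder collapses into the advertised $O(q^{-X/2}X^{k(k+1)/2-1})$ bound.
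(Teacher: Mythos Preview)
Your approach is correct and genuinely differs from the paper's. The paper does not expand $P_X^k$ directly; instead it first replaces $P_X^k$ by an auxiliary truncated Euler product
\[
P^*_{k,X}(s,\x_P)=\prod_{\deg Q\le X/2}\Bigl(1-\tfrac{\x_P(Q)}{|Q|^s}\Bigr)^{-k}\prod_{X/2<\deg Q\le X}\Bigl(1+\tfrac{k\x_P(Q)}{|Q|^s}+\tfrac{k^2\x_P(Q)^2}{2|Q|^{2s}}\Bigr),
\]
proves $P_X^k=P^*_{k,X}(1+O(q^{-X/6}/X))$, writes $P^*_{k,X}$ as a Dirichlet series $\sum_f\alpha_k(f)\x_P(f)|f|^{-1/2}$ over \emph{all} $X$-smooth $f$, truncates at $\deg f\le\vartheta g$, and then splits according to whether $f$ is a perfect square or not. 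The square terms give an Euler product that is massaged via Mertens into $2^{-k/2}\mathcal A_k(e^\gamma X)^{k(k+1)/2}$; the non-square terms are handled by Rudnick's bound.

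Your $\cosh/\sinh$ expansion produces instead a sum over \emph{squarefree} $X$-smooth $f$, so the diagonal reduces to the single term $f=1$ and the main term is the deterministic constant $C$ rather than an infinite square-sum. This is a genuine simplification: you bypass the intermediate lemma $P_X^k\approx P^*_{k,X}$ and the truncation/extension manoeuvres on the $f=\Box$ sum. The price is that $C$ involves the truncated quantities $A_X(Q),B_X(Q)$, and you must compare each local factor $e^{kB_X(Q)}\cosh(kA_X(Q))$ to its limiting value $\tfrac12[(1-|Q|^{-1/2})^{-k}+(1+|Q|^{-1/2})^{-k}]$ (which you correctly identify via the Remark after Conjecture~\ref{julio and asmaa conjecture}). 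The paper's route makes the connection to $d_k(Q^{2j})$ and hence to $\mathcal A_k$ more transparent, while yours keeps the analysis self-contained. Both routes ultimately lean on the same Mertens estimate and the same Weil-type character-sum bound, and both leave the exact size of the error term dependent on how sharp a Mertens formula one invokes; your final paragraph correctly flags that bookkeeping as the remaining work.
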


As in Bui and Florea \cite{BF}, for the partial Hadamard product, $Z_X(\x_P):=Z_X\left(\tfrac{1}{2},\x_P\right),$ we conjecture that 

\begin{conjecture}\label{201}
	For any $k\in\N$ we have 
	\begin{equation*}
	\left< Z_X\left(\x_P\right)^k\right>_{\p} \sim \frac{G\left(k+1\right)\sqrt{\Gamma(k+1)}}{\sqrt{G\left(2k+1\right)\Gamma(2k+1)}} \left(\frac{2g}{e^\gamma X}\right)^{\frac{k(k+1)}{2}}
	\end{equation*}
\end{conjecture}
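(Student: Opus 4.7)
The plan is to establish Conjecture~\ref{201} by modeling $Z_X(\chi_P)$ with a partial Hadamard product over the eigenphases of a random matrix from the symplectic group $\USp(2g)$, following the Gonek--Hughes--Keating framework \cite{GHK} as adapted to function fields by Bui and Florea \cite{BF}. The key observation is that $Z_X(1/2,\chi_P)$ depends essentially only on the zeros of $L(s,\chi_P)$ within distance $\lesssim 1/X$ of the central point, because $U(z)$ in \eqref{U} decays rapidly for large $|z|$ while behaving like $-\gamma - \log z$ as $z \to 0$. Using the Riemann hypothesis for function fields we write the zeros as $\rho_j = 1/2 + i\theta_j/\log q$ with $\theta_j \in [-\pi,\pi]$, so that $\log Z_X(1/2,\chi_P) = -\sum_j U(iX\theta_j/\log q)$ is dominated by the low-lying zeros on the scale $|\theta_j| \lesssim (\log q)/X$.

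I would then invoke the Katz--Sarnak philosophy: the family $\{L(s,\chi_P) : P \in \mathbb{P}_{2g+1}\}$ carries symplectic symmetry, so the normalized low-lying zeros are expected to become equidistributed (as $g\to\infty$) according to the eigenvalue statistics of Haar-random matrices in $\USp(2g)$. Denoting by $e^{\pm i\theta_j(A)}$ the eigenvalues of $A \in \USp(2g)$, this justifies replacing the average over $P$ by the Haar integral of
$$Z_X^{\mathrm{RM}}(A) = \exp\!\left(-\sum_j U\!\left(\tfrac{iX\theta_j(A)}{\log q}\right)\right)$$
and reduces the problem to computing $\int_{\USp(2g)} Z_X^{\mathrm{RM}}(A)^k \, dA$ via the Weyl integration formula. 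Using the asymptotic $U(z) \sim -\gamma - \log z$ near the origin, the cutoff at scale $1/X$ converts the eigenvalue density into a product of distances which is then estimated using the Selberg/Weyl integral technology underlying the Keating--Snaith formula
$$\int_{\USp(2N)} |\Lambda_A(1)|^k\, dA \sim \frac{G(k+1)\sqrt{\Gamma(k+1)}}{\sqrt{G(2k+1)\Gamma(2k+1)}}\,(2N)^{k(k+1)/2}.$$
Combining the factor $(2g)^{k(k+1)/2}$ from the full symplectic moment with the $(e^\gamma X)^{-k(k+1)/2}$ contribution from the cutoff yields precisely the predicted ratio $(2g/(e^\gamma X))^{k(k+1)/2}$ with the correct Barnes $G$-function constant.

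The principal obstacle is a rigorous justification of the random-matrix modeling at the level of moments. Although $n$-level density theorems establish equidistribution of the low-lying zeros of $L(s,\chi_P)$ for test functions of restricted Fourier support, controlling the joint distribution of a product involving \emph{all} zeros within distance $1/X$ of the centre, uniformly in $P$ as $g \to \infty$, lies beyond current techniques. One therefore proceeds heuristically; the main supporting evidence for Conjecture~\ref{201} is that combining it with Theorem~\ref{partial Euler product} under the splitting hypothesis
$$\left<P_X(\chi_P)^k Z_X(\chi_P)^k\right>_{\mathbb{P}_{2g+1}} \sim \left<P_X(\chi_P)^k\right>_{\mathbb{P}_{2g+1}} \left<Z_X(\chi_P)^k\right>_{\mathbb{P}_{2g+1}}$$
reproduces Conjecture~\ref{julio and asmaa conjecture} exactly. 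The case $k=1$, where the left-hand side can be compared directly with the Andrade--Keating first-moment asymptotic \cite{a&kprimemean}, provides an unconditional consistency check for the whole framework.
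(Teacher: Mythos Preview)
Your heuristic derivation is correct in spirit and follows essentially the same route as the paper: both model the partial Hadamard product by replacing the zeros of $L(s,\chi_P)$ with the eigenangles of a Haar-random matrix in $\USp(2g)$ and then evaluate the resulting symplectic average. The difference lies only in the technical tool used for that last step. The paper rewrites $Z_X(1/2,\chi_P)$ via the identity $E_1(-ix)+E_1(ix)=-2\,\mathrm{Ci}(|x|)$, periodizes the resulting test function in the eigenangle, and then invokes the Deift--Its--Krasovsky asymptotics \cite{DIK} for Toeplitz+Hankel determinants with Fisher--Hartwig singularities to obtain the symplectic expectation directly. You instead appeal to the small-$z$ asymptotic $U(z)\sim -\gamma-\log z$ together with the Keating--Snaith moment $\int_{\USp(2N)}|\Lambda_A(1)|^k\,dA$ to extract the factor $(2g/e^\gamma X)^{k(k+1)/2}$. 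The paper's route is more self-contained on the random-matrix side (the DIK theorem handles exactly the function $\phi$ that arises, singularity and all), whereas your argument requires an additional heuristic step to pass from the cut-off product to the characteristic-polynomial moment; but since the statement is a conjecture in either case, both derivations serve equally well as motivation.
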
  

We note from Theorem \ref{Hybrid Euler-Hadamard Product}  that $L(1/2,\x_P)P_X(\x_P)^{-1} = Z_X(\x_P)$. This allow us to derive the first moment of $Z_X.$

\begin{theorem}\label{Z_X}
	Let $0<c<2$. Suppose $X\leqslant(2-c)\log g/\log q.$ Then we have 
	\begin{equation*}
	\left< L\left(\tfrac{1}{2},\x_P\right) P_X(\x_P)^{-1}\right>_{\p} = \frac{2g}{e^{\gamma} X} + O\left(g X^{-2}\right). 
	\end{equation*}
\end{theorem}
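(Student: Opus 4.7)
The strategy is to compute $\bigl\langle L(\tfrac12,\x_P)P_X(\x_P)^{-1}\bigr\rangle_{\p}$ directly, in the spirit of the treatment of the squarefree family by Bui and Florea \cite{BF}. First I would expand
\[
P_X(\tfrac12,\x_P)^{-1} = \sum_n \frac{\beta_X(n)\x_P(n)}{|n|^{1/2}},
\]
with coefficients $\beta_X(n)$ independent of $P$ and supported on monic polynomials whose prime factors have degree at most $X$. Up to an error absorbed into $O(g/X^2)$, one may replace $\beta_X(n)$ by $\mu(n)$ restricted to squarefree $n$ with prime factors of degree $\le X$, corresponding to the approximation $P_X(\x_P)^{-1}\approx\prod_{\deg Q\le X}(1-\x_P(Q)/|Q|^{1/2})$; the higher prime-power corrections hidden in the exponential definition of $P_X$ are negligible by $q^{-X/2}$-type bounds on their Dirichlet-series magnitude.

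Inserting the Dirichlet polynomial expansion $L(\tfrac12,\x_P)=\sum_{f\in\M_{\le 2g}}\x_P(f)/|f|^{1/2}$ and averaging gives
\[
\bigl\langle L\,P_X^{-1}\bigr\rangle_{\p} = \sum_n \frac{\beta_X(n)}{|n|^{1/2}}\bigl\langle L(\tfrac12,\x_P)\x_P(n)\bigr\rangle_{\p}.
\]
I would then apply the first twisted moment of this family (the main new unconditional input of the paper): writing $n=n_1 n_2^2$ with $n_1$ squarefree, its main term has shape $\tfrac{2g-\deg n_1}{|n_1|^{1/2}|n_2|}\prod_{Q\mid n}(\text{local factor})$ plus a power-saving off-diagonal error. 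The factor $2g$ arises from the functional-equation-shortened form of the Dirichlet sum for $L(\tfrac12,\x_P)$. Plugging this in and summing, the arithmetic sum over $n$ collapses to an Euler product over primes of degree at most $X$, which reduces to $\prod_{\deg Q\le X}(1-1/|Q|)\bigl(1+O(|Q|^{-3/2})\bigr)$. The function field Mertens theorem $\prod_{\deg Q\le X}(1-1/|Q|)\sim e^{-\gamma}/X$ then produces the main term $2g/(e^\gamma X)$.

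The error $O(g/X^2)$ bundles three contributions: (i) the $O(\deg n)$ secondary term in the twisted first moment, which after summing against $\beta_X(n)/|n|^{1/2}$ contributes $O(g\log X/X^2)$; (ii) the lower-order Mertens corrections; and (iii) the off-diagonal contribution where $fn$ is not a perfect square, controlled by the Weil-type bound on $\sum_{P\in\p}\x_P(m)$, which is admissible under the hypothesis $X\le(2-c)\log g/\log q$. The main obstacle will be the algebraic simplification showing that the local factors at primes of degree $\le X$ coming from $\beta_X$ and from the twisted first moment combine cleanly into the Mertens product $\prod_{\deg Q\le X}(1-1/|Q|)$ up to $O(X^{-2})$, in particular verifying that the higher prime-power discrepancies between $P_X^{-1}$ and $\prod_{\deg Q\le X}(1-\x_P(Q)/|Q|^{1/2})$ only affect the subleading terms.
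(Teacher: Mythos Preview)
Your overall architecture matches the paper's: expand $P_X(\x_P)^{-1}$ as a Dirichlet polynomial with coefficients $\alpha_{-1}$ (what you call $\beta_X$), insert the twisted first moment $I(l;g)$, collapse the arithmetic sum into an Euler product over primes of small degree, and apply Mertens. However, two of the details you supply are wrong, and together they mislocate the factor of~$2$ in the main term.

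First, the twisted first moment for this family (the paper's Theorem~\ref{1TWM}) has main term $(g-\deg l_1+1)/|l_1|^{1/2}$, not $(2g-\deg n_1)/(|n_1|^{1/2}|n_2|)\cdot\prod_{Q\mid n}(\cdots)$. The leading coefficient is $g$, not $2g$, and there are no local Euler factors or $|n_2|$ in the denominator; the form you wrote is the one for the square-free family in \cite{BF}, which is genuinely more complicated than the irreducible case. Second, your simplification ``replace $\beta_X(n)$ by $\mu(n)$ on primes of degree $\le X$, with error $O(q^{-X/2})$'' fails: the discrepancy between $P_X^{-1}$ and $\prod_{\deg Q\le X}(1-\x_P(Q)/|Q|^{1/2})$ contains the terms $\x_P(Q)^2/(2|Q|)$ for $X/2<\deg Q\le X$, and since $\x_P(Q)^2=1$ here these sum to a \emph{constant} $\approx\tfrac12\log 2$, not $O(q^{-X/2})$. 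The paper avoids this by working with $P^*_{-1,X}$ (Lemma~\ref{P*&P}), whose local factor at such $Q$ is $1-\x_P(Q)/|Q|^{1/2}+\x_P(Q)^2/(2|Q|)$, and records this via the coefficients $\alpha_{-1}(Q^2)=1/2$. In the paper's computation the factor of $2$ in $2g/(e^\gamma X)$ does not come from the functional equation: it comes from the Euler product for $J_1$ having its genuine $(1-1/|Q|)$ factors only over $\deg Q\le X/2$, so that Mertens gives $\prod_{\deg Q\le X/2}(1-1/|Q|)\sim 2/(e^\gamma X)$. If you carried out your version with the correct twisted moment coefficient $g$ and your $\mu$-approximation over $\deg Q\le X$, you would obtain $g/(e^\gamma X)$, off by a factor of~$2$.
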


Theorem \ref{partial Euler product} and Theorem \ref{Z_X} suggest that when $X$ is not very large relative to $q^g$, the $k$th moment of $L(1/2,\x_P)$ is asymptotic to the product of the moments of $P_X(\x_P)$ and $Z_X(\x_P)$ for $k=1$.  In general we believe that this is true and we present the following conjecture. 

\begin{conjecture}(Splitting Conjecture). 
	Let $0<c<2$. Suppose $X\leqslant (2-c)\log g/\log q$ and $X,g $ tends to $\infty$. Then for any $k\geqslant 0$ we have 

\begin{equation*}
\left< L\left(\tfrac{1}{2},\x_P\right)^k\right>_{\p} \sim \left<P_X(\x_P)^k\right>_{\p} \left<Z_X(\x_P)^k\right>_{\p}.
\end{equation*}
\end{conjecture}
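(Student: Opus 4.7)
The starting point is the exact factorization $L(1/2,\x_P)^k = P_X(\x_P)^k Z_X(\x_P)^k$ supplied by Theorem~\ref{Hybrid Euler-Hadamard Product}. Averaging over $P\in\p$ and dividing, the conjecture is equivalent to the asymptotic independence statement
\begin{equation*}
\left<P_X(\x_P)^k Z_X(\x_P)^k\right>_{\p} \sim \left<P_X(\x_P)^k\right>_{\p} \left<Z_X(\x_P)^k\right>_{\p}.
\end{equation*}
The heuristic, following Gonek-Hughes-Keating, is that in the regime $X \leq (2-c)\log g/\log q$ the Euler factor $P_X$ depends only on values $\x_P(f)$ for $\deg f \leq X$ (short-range arithmetic information), whereas $Z_X$ depends only on zeros of $L(s,\x_P)$ within distance $O(1/X)$ of $s=1/2$ (spectral information modelled by characteristic polynomials of random unitary-symplectic matrices in the Katz-Sarnak framework). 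These two sources of randomness are expected to decouple as $g\to\infty$.

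The plan breaks into three stages. First, verify mutual consistency of leading constants: combining Theorem~\ref{partial Euler product} with Conjecture~\ref{201} on the right-hand side, the factors $(e^\gamma X)^{k(k+1)/2}$ cancel exactly and what remains is precisely the right-hand side of Conjecture~\ref{julio and asmaa conjecture}. Thus the splitting, together with the (now proven) asymptotic for $\langle P_X^k\rangle_{\p}$, is \emph{equivalent} to Conjecture~\ref{201} $\Rightarrow$ Conjecture~\ref{julio and asmaa conjecture}, giving strong structural motivation for the statement. Second, verify $k=1$ from the results already in hand: Theorem~\ref{Z_X} computes $\langle L(1/2,\x_P)P_X^{-1}\rangle_{\p}$ directly, Theorem~\ref{partial Euler product} gives $\langle P_X\rangle_{\p}$, and the first moment $\langle L(1/2,\x_P)\rangle_{\p}$ is known from Andrade-Keating \cite{a&kprimemean}, so one matches the three asymptotics. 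Third, attempt $k=2$ via the identity $\langle P_X^2 Z_X^2\rangle_{\p}=\langle L(1/2,\x_P)^2\rangle_{\p}$, and compare with $\langle P_X^2\rangle_{\p}\langle Z_X^2\rangle_{\p}$ using the second moment asymptotics of \cite{a&kprimemean,BF2}; this is a concrete finite check modulo Conjecture~\ref{201} at $k=2$.

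The main obstacle is proving genuine independence for $k \geq 2$. A rigorous proof would require either a closed-form evaluation of the twisted average $\langle L(1/2,\x_P)^k D(\x_P)\rangle_{\p}$ for arbitrary short Dirichlet polynomials $D$ arising from $P_X(\x_P)^{-k}$ — which is the twisted $k$th moment problem, open already for $k=3$ in the family $\chi_P$ — or, equivalently, a joint central limit theorem for $(\log P_X(\x_P),\log Z_X(\x_P))$ with asymptotically vanishing covariance, which is essentially as hard as the Keating-Snaith conjectures themselves. Consequently I expect any realistic progress to be conditional: prove the splitting assuming Conjecture~\ref{201}, thereby reducing Conjecture~\ref{julio and asmaa conjecture} to moments on the random-matrix side, which can then be attacked via function-field zero statistics and the symplectic symmetry of the family.
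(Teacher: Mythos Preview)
The statement is a \emph{conjecture}; the paper offers no proof, and your proposal is not a proof either --- as you yourself concede in the final paragraph. What you have written is a heuristic motivation together with a programme of partial checks, which is precisely how the paper treats the statement: the consistency check in your Stage~1 is the one-line remark following the conjecture (``Theorem~\ref{partial Euler product}, Conjecture~\ref{201} and the Splitting Conjecture imply Conjecture~\ref{julio and asmaa conjecture}''), and your Stage~2 is exactly the paper's evidence for $k=1$, namely Theorem~\ref{Z_X} combined with Theorem~\ref{partial Euler product}. Your Stage~3 ($k=2$) is not carried out in the paper; the authors flag the required twisted second moment as ongoing work based on \cite{BF2}.

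The genuine gap is the one you name: for $k\geqslant 2$ one needs the twisted $k$th moment $\langle L(1/2,\chi_P)^k \chi_P(l)\rangle_{\p}$ for all short $l$, which is open already at $k=3$. Your proposed fallback --- ``prove the splitting assuming Conjecture~\ref{201}'' --- does not work as stated. Conjecture~\ref{201} only supplies the value of $\langle Z_X^k\rangle_{\p}$; it says nothing about the joint average $\langle P_X^k Z_X^k\rangle_{\p}=\langle L(1/2,\chi_P)^k\rangle_{\p}$, so assuming it gives no handle on the correlation you need to kill. The logical flow runs the other way: the Splitting Conjecture together with Theorem~\ref{partial Euler product} makes Conjecture~\ref{201} equivalent to Conjecture~\ref{julio and asmaa conjecture}, but neither of the latter two can be used to deduce the splitting without independent input on $\langle L^k\rangle_{\p}$.
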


One can note that Theorem \ref{partial Euler product}, Conjecture \ref{201} and the Splitting Conjecture imply Conjecture \ref{julio and asmaa conjecture}. 

Before proving Theorem \ref{Z_X} we have to have knowledge and understanding about the twisted moments of quadratic Dirichlet $L$-functions over function fields, i.e.
\begin{equation*}
I(l;g)= \left<L\left(\frac{1}{2},\x_P\right) \x_P(l)\right>_{\p}.
\end{equation*}
Throughout this paper we assume that $l$ is a monic polynomial in $\f$ with $\deg(l)\ll g$. In Section \ref{1twisted} we shall establish the following result.

\begin{theorem}(Twisted first moment).\label{1TWM}
	Let $l=l_1l_2^2$ with $l_1$ square-free. Then we have 
	\begin{equation*}
	\begin{split}
	I(l;g) &= \frac{1}{|l_1|^{1/2}} \left(g-\deg(l_1)+1\right) + O\left(  |l_1|^{-\frac{1}{2}} |l|^{\frac{1-\varepsilon}{2}} q^{\frac{3}{2}(\varepsilon-1) g} \right) \\
	& \ \ \ + O\left(q^{-\frac{g}{2}} \left(\deg(l)+g\right)\right).
	\end{split}
	\end{equation*}	
\end{theorem}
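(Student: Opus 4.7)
The plan is to start from the expansion of $L(\tfrac12,\x_P)$ as a Dirichlet polynomial and then interchange the order of summation with the average over primes. Since $L(s,\x_P)$ is a polynomial of degree $2g$ in $q^{-s}$ with a functional equation relating $s\leftrightarrow 1-s$, an approximate functional equation allows me to write
\[
L\!\left(\tfrac12,\x_P\right)=\sum_{\substack{f\in\M\\ \deg(f)\leqslant g-1}}\frac{\x_P(f)}{|f|^{1/2}}\;+\;(\text{dual sum})\;+\;(\text{boundary term at }\deg f=g),
\]
where the dual sum is a second copy of the same expression coming from the functional equation with root number $+1$. Multiplying by $\x_P(l)$ and averaging over $P\in\p$ interchanges the sums, so that the inner quantity becomes $\left\langle\x_P(lf)\right\rangle_{\p}$.

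To evaluate this average, I invoke quadratic reciprocity in $\f$. Because $q\equiv 1\pmod 4$ and $\deg(P)=2g+1>\deg(l)+g\geqslant\deg(lf)$ (so $P\nmid lf$ automatically), one has $\x_P(lf)=\x_{h_1}(P)$, where $h_1$ denotes the square-free part of $lf$. This reduces the problem to estimating $\sum_{P\in\p}\x_{h_1}(P)$, and the analysis then splits into two cases according to whether $h_1=1$ or not.

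The diagonal case $h_1=1$ (i.e.\ $lf$ is a perfect square) produces the main term. Writing $l=l_1l_2^2$ and parametrising $f=l_1m^2$, so that $lf=(l_1l_2m)^2$, the diagonal contribution from the first sum above reduces to
\[
\frac{1}{|l_1|^{1/2}}\sum_{\substack{m\in\M\\ 2\deg(m)+\deg(l_1)\leqslant g-1}}\frac{1}{|m|}.
\]
Applying the elementary identity $\sum_{\deg m\leqslant N}|m|^{-1}=N+1$, adding the matching contribution from the dual sum and accounting for the boundary term at $\deg f=g$ (which handles the parity of $g-\deg(l_1)$), one obtains the desired main term $|l_1|^{-1/2}(g-\deg(l_1)+1)$.

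The off-diagonal case $h_1\neq 1$ produces both error terms. The cleaner bound $O(q^{-g/2}(\deg(l)+g))$ follows from the Riemann Hypothesis for the Dirichlet $L$-function $L(s,\x_{h_1})$: its inverse zeros have absolute value $\sqrt q$, so the explicit formula gives $|\sum_{P\in\p}\x_{h_1}(P)|\ll\deg(h_1)\,q^{g+1/2}/(2g+1)$; summing against $|f|^{-1/2}$ over $f\in\M_n$ for $n\leqslant g$ and dividing by $|\p|\sim q^{2g+1}/(2g+1)$ yields this bound. The sharper error $O(|l_1|^{-1/2}|l|^{(1-\varepsilon)/2}q^{3(\varepsilon-1)g/2})$ is, I expect, the main obstacle. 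It should arise from a Perron-type contour argument applied to the twisted generating series $\sum_f\x_P(lf)|f|^{-s-1/2}$, where one uses a Weil-type bound for $L(s,\x_{h_1})$ on the critical line with explicit dependence on $|h_1|\leqslant|l||f|$ and then balances the growth in $|l|$ against the gain obtained by shifting the contour deeper than the Hasse--Weil barrier. Tracking these bounds uniformly in both $\deg(l)$ and $g$, and correctly handling the boundary range $\deg(f)\approx g$ of the approximate functional equation, is the technical heart of the proof; assembling the main and error contributions at the end is then routine.
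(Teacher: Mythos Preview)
Your approach coincides with the paper's for the main term and the first error term: both start from the approximate functional equation
\[
L\!\left(\tfrac12,\x_P\right)=\sum_{f\in\M_{\leqslant g}}\frac{\x_P(f)}{|f|^{1/2}}+\sum_{f\in\M_{\leqslant g-1}}\frac{\x_P(f)}{|f|^{1/2}},
\]
separate the terms with $lf$ a perfect square from the rest, parametrise the square terms by $f=l_1m^2$ to extract $|l_1|^{-1/2}(g-\deg(l_1)+1)$, and bound the non-square terms via the Weil/Riemann Hypothesis estimate for $\sum_{P\in\p}\x_{h_1}(P)$ (this is exactly Rudnick's inequality used in the paper). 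Your handling of these two pieces is correct, and the computation you sketch for the $O(q^{-g/2}(\deg(l)+g))$ term goes through.

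The genuine gap is your diagnosis of the second error term. You describe it as ``the main obstacle'' and guess that it arises from a delicate Perron/contour argument on the off-diagonal; this is not where it comes from. In the paper it originates entirely from the \emph{diagonal} side: once you write $lf=(f_1l_1l_2)^2$, you must know that $\x_P((f_1l_1l_2)^2)=1$, i.e.\ that $P\nmid f_1l_1l_2$. The paper splits the square terms into those with $\deg(f_1l_1l_2)<2g+1$ (where this is automatic) and those with $\deg(f_1l_1l_2)\geqslant 2g+1$; the latter range is bounded by a one-line Rankin trick, inserting the factor $(|f_1l_1l_2|/q^{2g+1})^{1-\varepsilon}\geqslant 1$ and summing freely. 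This is what produces $O(|l_1|^{-1/2}|l|^{(1-\varepsilon)/2}q^{3(\varepsilon-1)g/2})$. There is no further off-diagonal input. In fact, since $\deg f_1\leqslant g/2$ and $\deg(l_1)+\deg(l_2)\leqslant\deg(l)$, one has $\deg(f_1l_1l_2)\leqslant (g+\deg l)/2$, so this range is empty whenever $\deg(l)\leqslant 3g+1$; under any reasonable interpretation of ``$\deg(l)\ll g$'' the term is vacuous. So far from being the technical heart, it is the most trivial part of the argument. (Relatedly, your justification of $P\nmid lf$ via $\deg(P)>\deg(lf)$ implicitly assumes $\deg(l)\leqslant g$; the cleaner argument is that $P$ is irreducible of degree $2g+1$ while each of $l$ and $f$ has degree $<2g+1$.)
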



 \section{Preparations}
 
We first give some preliminary facts about quadratic Dirichlet $L$-functions in function fields.
		
	
\subsection{Facts about $\f$}
${\color{white}gg}$\\

The zeta function of $\f$ is denoted by $\z(s)$ and defined by
\begin{equation*}
\begin{split}
\z(s) &:=\sum_{f \text{ monic}} \frac{1}{|f|^s} = \prod_{\substack{P \text{ monic} \\ \text{irreducible}}} \left(1-|P|^{-s}\right)^{-1}, \ \ \ \text{Re}(s)>1.
\end{split}
\end{equation*} 
Since there are $q^n$ monic polynomials of degree $n$, it can easily be shown that 
\begin{equation*}
\z(s)=\frac{1}{1-q^{1-s}},
\end{equation*}
 which provides an analytic continuation of the zeta-function to the whole complex plane, with simple pole at $s=1$ and no zeros. This fact leads to the analogue of Prime Number Theorem for polynomials in $\f$.
 
 \begin{theorem}(Prime Polynomial Theorem)\\
 	If $\pi_{\A}(n)$ denotes the number of monic irreducible polynomials in $\A$ of degree $n$, then
 	 \begin{equation}\label{PNT}
 	 \pi_ {\A}(n)=\frac{q^n}{n}+O\left(\frac{q^{n/2}}{n}\right).
 	 \end{equation}	
 \end{theorem}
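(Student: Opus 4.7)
The plan is to exploit the closed form $\zeta_{\A}(s)=1/(1-q^{1-s})$ together with the Euler product, which together encode a clean generating-function identity for $\pi_{\A}(n)$. Setting $u=q^{-s}$, both sides become formal power series in $u$, and comparing coefficients will reduce the theorem to a statement about divisors.

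First I would take logarithms of the Euler product. Expanding $-\log(1-|P|^{-s})=\sum_{k\geq 1}|P|^{-ks}/k$ and grouping monic irreducibles of the same degree gives
\begin{equation*}
\log\zeta_{\A}(s)=\sum_{n\geq 1}\sum_{k\geq 1}\frac{\pi_{\A}(n)}{k}\,u^{nk}.
\end{equation*}
On the other hand $\log\zeta_{\A}(s)=\log\bigl(1/(1-qu)\bigr)=\sum_{m\geq 1}(qu)^m/m$. Matching the coefficient of $u^n$ and clearing denominators yields the key identity
\begin{equation*}
q^n=\sum_{d\mid n} d\,\pi_{\A}(d).
\end{equation*}
(Equivalently, this reflects the fact that $T^{q^n}-T$ is the product of all monic irreducibles in $\A$ whose degree divides $n$, which one could also use as a starting point.)

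Next I would apply Möbius inversion to isolate $\pi_{\A}(n)$:
\begin{equation*}
n\,\pi_{\A}(n)=\sum_{d\mid n}\mu(n/d)\,q^d=q^n+\sum_{\substack{d\mid n\\ d<n}}\mu(n/d)\,q^d.
\end{equation*}
The main term is $q^n$, and every proper divisor of $n$ is at most $n/2$, so the error is controlled by a geometric sum
\begin{equation*}
\Bigl|\sum_{\substack{d\mid n\\ d<n}}\mu(n/d)\,q^d\Bigr|\leq \sum_{d=0}^{n/2}q^d\leq \frac{q}{q-1}\,q^{n/2}=O\bigl(q^{n/2}\bigr).
\end{equation*}
Dividing through by $n$ produces exactly $\pi_{\A}(n)=q^n/n+O(q^{n/2}/n)$.

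There is no real obstacle here: the argument is classical and the only mild subtlety is ensuring the formal power series manipulation is legitimate, which is immediate since $\zeta_{\A}(s)$ has radius of convergence $|u|<q^{-1}$ as a function of $u=q^{-s}$, so all series involved converge absolutely in a common region and the coefficient comparison is rigorous. The whole proof amounts to a Möbius inversion plus a geometric-series bound.
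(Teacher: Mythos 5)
Your proof is correct and is the standard argument; the paper itself does not prove this theorem but cites Rosen's book, where the proof is exactly the Möbius-inversion argument you give (Rosen derives the key identity $q^n=\sum_{d\mid n}d\,\pi_{\A}(d)$ from the factorisation of $T^{q^n}-T$, while you obtain it from the logarithmic derivative of $\zeta_{\A}$, but these are equivalent routes to the same identity).
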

		
\subsection{Quadratic Dirichlet $L$-functions for $\x_P$}
${\color{white}gg}$\\
	
For $P$ a monic irreducible polynomial, define the quadratic character $\left(\frac{f}{P}\right)$ by 
\begin{equation*}
\left(\frac{f}{P}\right)= \begin{cases}
1 & \text{ if } f \text{ is a square (mod }P),P\nmid f\\
-1 & \text{ if } f \text{ is not a square (mod }P),P\nmid f\\
0  & \text{ if } P\mid f.\\
\end{cases}
\end{equation*}
The quadratic reciprocity law states that for $A,B$ non-zeros and relatively prime monic polynomials, we have 
\begin{equation*}
\left(\frac{A}{B}\right)= \left(\frac{B}{A}\right) \left(-1\right)^{\frac{q-1}{2} \text{deg}(A)\text{deg}(B)}. 
\end{equation*}
We denote by $\x_P$ to be the quadratic character defined in terms of the quadratic residue symbol for $A$
\begin{equation*}
\x_P(f)=\left(\frac{P}{f}\right),
\end{equation*}
where $f\in \f$. 

In this paper, the focus will be in the family of quadratic Dirichlet $L$-functions associated to polynomials $P\in\p.$ The quadratic Dirichlet $L$-function attached to the character $\x_P$ is defined to be 
\begin{equation*}
\begin{split}
L\left(s,\x_P\right) &:= \sum_{f\in\M} \frac{\x_P(f)}{|f|^s}  = \prod_{\substack{Q \text{ monic} \\ \text{ irreducible} }} \left(1-\frac{\x_P(Q)}{|Q|^{s}}\right)^{-1},\text{\color{white} fjd} \text{Re}(s)>1.
\end{split} 
\end{equation*}
From \cite[Propositions 4.3, 14.6 and 17.7]{Rosen}, $L(s,\x_P)$ is a polynomial in $u=q^{-s}$ of degree $\deg(P)-1$ and 
\begin{equation*}
L(s,\x_P)=\mathcal{L}(u,\x_P) = L_{C_P}(u),
\end{equation*}
where $L_{C_P}(u)$ is the numerator of the zeta function associated to the hyperelliptic curve given in affine form by 
\begin{equation*}
C_P:y^2=P(T)
\end{equation*}
with
\begin{equation*}
P(T)=T^{2g+1}+a_{2g}T^{2g}+\cdots+a_1T+a_0
\end{equation*}
a monic irreducible polynomial in $\f$ of degree $2g+1$. From Weil \cite{weil} we know that $L(s,\chi_{P})$ satisfies a Riemann Hypothesis.

\subsection{Preliminary Lemmas}
${\color{white}gg}$\\

The first Lemma in this section is from \textup{\cite[Lemma 3.3]{a&kmeanvalue}}.
\begin{lemma}
	We have the following ``approximate" functional equation
	\begin{equation}\label{aprox}
	L\left(\tfrac{1}{2},\x_P\right) = \sum_{f\in\M_{\leqslant g}} \frac{\x_P(f)}{\sqrt{|f|}} + \sum_{f\in\M_{\leqslant g-1}} \frac{\x_P(f)}{\sqrt{|f|}}.
	\end{equation}
\end{lemma}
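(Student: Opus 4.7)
The plan is to exploit the two standard structural facts about $\lL(u,\x_P)$ already cited in the preceding subsection: it is a polynomial in $u = q^{-s}$ of degree $2g$, and it satisfies a functional equation reflecting the symmetry $u \leftrightarrow 1/(qu)$. Writing $a_n := \sum_{f \in \M_n} \x_P(f)$, the Dirichlet series collapses to $\lL(u,\x_P) = \sum_{n=0}^{2g} a_n u^n$, the vanishing of the higher coefficients being exactly the polynomial condition.

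Next I would invoke the functional equation in its polynomial form,
$$\lL(u,\x_P) = (qu^2)^g\, \lL\!\left(\tfrac{1}{qu},\x_P\right),$$
which follows from Weil's theorem \cite{weil} together with the fact that $\x_P$ is a primitive real character of conductor $P$ and the hyperelliptic curve $C_P:y^2=P(T)$ has genus $g$; under the standing assumption $q\equiv 1\pmod 4$ the root number is $+1$. Comparing the coefficients of $u^k$ on the two sides yields the symmetry
$$a_{2g-k} = q^{\,g-k}\, a_k, \qquad 0\leq k \leq 2g.$$

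Specialising to $s=1/2$, i.e.\ $u = q^{-1/2}$, and splitting the polynomial expansion at $n=g$ gives
$$L(\tfrac{1}{2},\x_P) = \sum_{n=0}^{g} a_n q^{-n/2} + \sum_{n=g+1}^{2g} a_n q^{-n/2}.$$
In the second sum I would substitute $n = 2g-m$ with $0 \leq m \leq g-1$ and apply the coefficient symmetry; the combined weight $q^{g-m}\cdot q^{(m-2g)/2}$ collapses to $q^{-m/2}$, so the tail reduces to $\sum_{m=0}^{g-1} a_m q^{-m/2}$. Regrouping each $a_n q^{-n/2}$ back as $\sum_{f\in\M_n}\x_P(f)/\sqrt{|f|}$ then produces exactly the two truncated sums appearing on the right-hand side of \eqref{aprox}. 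The only ingredient with any real content is the functional equation itself, which is already at the authors' disposal through \cite{Rosen} and \cite{weil}; the remainder is routine index bookkeeping, and I foresee no obstacle beyond keeping careful track of the exponents during the substitution $n = 2g-m$.
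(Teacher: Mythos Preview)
Your argument is correct and is precisely the standard derivation of this identity: the polynomiality of $\lL(u,\x_P)$ of degree $2g$, the functional equation $\lL(u,\x_P)=(qu^2)^g\,\lL(1/(qu),\x_P)$ (with sign $+1$ under $q\equiv 1\pmod 4$), the resulting coefficient symmetry $a_{2g-k}=q^{g-k}a_k$, and the reflection of the tail $n>g$ back onto $0\le m\le g-1$ all check out exactly as you wrote them.

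The only point of comparison is that the paper does not actually prove this lemma at all; it simply quotes it as \cite[Lemma~3.3]{a&kmeanvalue}. What you have supplied is essentially the proof one finds in that reference, so there is no genuine methodological difference---you have just filled in what the authors chose to import by citation.
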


\begin{lemma}\label{expected_value} For any monic polynomial $f\in\f$ we have
	\begin{equation*}
	\lim_{\deg(P) \to \infty} \frac{1}{|\p|} \sum_{P\in\p}\x_P(f)= 
	\begin{cases} 
	1 &\mbox{if } f=\Box \\ 
	0 & \mbox{otherwise. } \end{cases}
	\end{equation*}
\end{lemma}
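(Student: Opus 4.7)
The plan is to split on whether $f$ is a perfect square in $\f$. When $f$ is a square the average is trivially $1$ for $g$ large; when it is not, the sum reduces to a non-trivial Dirichlet character sum over primes of degree $2g+1$, to which the Weil bound applies.

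If $f=h^2$, the multiplicativity of the Jacobi symbol gives $\x_P(f)=\leg{P}{h}^2$, which equals $1$ whenever $P\nmid h$ and $0$ otherwise. Since $\deg P=2g+1$ eventually exceeds $\deg h$, no $P\in\p$ can divide $h$, so $\x_P(f)\equiv 1$ on $\p$ and the average equals $1$.

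Now suppose $f$ is not a square, and write $f=f_1f_2^2$ with $f_1$ monic, square-free, of degree $\geq 1$. For $g$ large, every $P\in\p$ satisfies $P\nmid f_2$, so $\x_P(f)=\leg{P}{f_1}\leg{P}{f_2}^2=\leg{P}{f_1}$. Factoring $f_1=Q_1\cdots Q_r$ into distinct monic irreducibles, the Jacobi symbol
\begin{equation*}
\psi_{f_1}(A):=\leg{A}{f_1}=\prod_{i=1}^{r}\leg{A}{Q_i}
\end{equation*}
splits as a product of Legendre symbols and therefore depends only on $A\bmod f_1$, giving a Dirichlet character modulo $f_1$; it is non-principal because its restriction to any $(\f/Q_i)^\times$ is the non-trivial Legendre character.

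It remains to estimate $S:=\sum_{P\in\p}\psi_{f_1}(P)$. The Dirichlet $L$-function $L(s,\psi_{f_1})$ is a polynomial in $u=q^{-s}$ of degree at most $\deg f_1-1$, and by Weil's theorem all of its inverse zeros have absolute value $\sqrt q$. Taking the logarithmic derivative and extracting the coefficient of $u^{2g+1}$ (the function-field explicit formula, with the prime-power contribution absorbed into the error) yields $|S|\ll \deg(f_1)\,q^{(2g+1)/2}$. Combining this bound with \eqref{PPT} gives
\begin{equation*}
\frac{1}{|\p|}\sum_{P\in\p}\x_P(f)\ll \frac{(2g+1)\deg(f_1)}{q^{g+1/2}}\longrightarrow 0
\end{equation*}
as $g\to\infty$. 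The only point demanding any care is recognizing $\psi_{f_1}$ as an honest Dirichlet character modulo $f_1$, so that the standard function-field Weil bound applies; this is precisely why the square-free hypothesis on $f_1$ matters.
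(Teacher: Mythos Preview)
Your proof is correct and follows essentially the same route as the paper: split on whether $f$ is a square, handle the square case trivially, and for the non-square case bound the resulting character sum over primes. The only difference is that the paper quotes Rudnick's bound $\bigl|\sum_{P\in\p}\chi_P(f)\bigr|\ll \tfrac{q^g}{g}\deg(f)$ as a black box, whereas you reduce to the square-free kernel $f_1$, identify $\psi_{f_1}=\leg{\cdot}{f_1}$ as a non-principal Dirichlet character, and sketch the explicit-formula derivation of the bound from Weil. Your version is thus more self-contained; the paper's is shorter but relies on the cited result.
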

\begin{proof}
	Consider the case when $f=\Box,$ then we have 
	
	\begin{equation*}
	\sum_{P\in\p} \x_P(f)=\sum_{P\in\p}\x_P(l^2) = \sum_{\substack{P\in\p \\ P\nmid l}} 1, 
	\end{equation*}
	since we are summing over primes of degree $2g+1$ and $P\nmid l,$ and $\text{deg}(l)\leqslant 2g,$ which means that we are counting all primes of degree $2g+1,$ thus
	
	\begin{equation*}
	\sum_{\substack{P\in \p \\ P\nmid l}} 1 = |\p|.
	\end{equation*}
	Hence if $n$ is a square of a polynomial,
	
	\begin{equation*}
	\lim_{\deg(P)\to\infty} \frac{1}{|\p|} \sum_{P\in\p} \x_P(f) = 1.
	\end{equation*}
	
	It remains to consider the case when $f\neq\Box.$ Rudnick \cite{Rudnick} has proven that
	
	\begin{equation}\label{chi}
	\left|\sum_{P\in\p}\x_P(f)\right| \ll \frac{q^g}{g} \text{deg}(f),
	\end{equation}
	and from Prime Polynomial Theorem \ref{PNT} we have
	
	\begin{equation*}
	\begin{split}
	\frac{1}{|\p|} \sum_{P\in\p} \x_P(f) & \ll q^{-g} \text{deg}(f).
	\end{split}
	\end{equation*}
	Hence if $n$ is not a square of a polynomial we have that 
	
	\begin{equation*}
	\lim_{\deg(P)\to\infty} \frac{1}{|\p|} \sum_{P\in\p} \x_P(f) = 0.
	\end{equation*}
\end{proof}

The next two lemmas are quoted from \cite{BF}.
\begin{lemma}\label{Mertens} (Merten's Theorem)
	We have 
	
	\begin{equation*}
	\prod_{\substack{P \text{ irreducible} \\ \deg(P)\leqslant X}} \left(1-\frac{1}{|P|}\right)^{-1} = e^\gamma X+O(1),
	\end{equation*}
	where $\gamma$ is the Euler constant.
	\end{lemma}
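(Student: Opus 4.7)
The plan is to adapt the classical proof of Mertens' theorem to the polynomial ring $\f$, with the Prime Polynomial Theorem \eqref{PNT} playing the role of the usual Prime Number Theorem.

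First, I would take logarithms of the left-hand side and expand via $-\log(1-|P|^{-1}) = \sum_{k\geq 1} (k|P|^k)^{-1}$ to obtain
\begin{equation*}
\log \prod_{\substack{P\text{ irreducible}\\ \deg(P)\leq X}}\!\!\!\! \bigl(1 - |P|^{-1}\bigr)^{-1}
\;=\; \sum_{\substack{P\text{ irreducible}\\ \deg(P)\leq X}}\!\!\!\! \frac{1}{|P|}
\;+\; \sum_{k\geq 2}\frac{1}{k} \sum_{\substack{P\text{ irreducible}\\ \deg(P)\leq X}}\!\!\!\! \frac{1}{|P|^k}.
\end{equation*}
The double sum for $k\geq 2$ is absolutely convergent uniformly in $X$: using $\pi_{\A}(n) \ll q^n/n$ one finds $\sum_P |P|^{-k} \ll \sum_n q^{-n(k-1)}/n$, which is finite for $k\geq 2$. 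Hence that contribution equals a constant $C_{\mathrm{tail}}$ plus an error of size $O(q^{-X})$ coming from truncating the outer sum at degree $X$.

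Next I would handle the main $k=1$ term by invoking the Prime Polynomial Theorem, which gives $\pi_{\A}(n)/q^n = 1/n + O(q^{-n/2}/n)$. Summing over $n\leq X$ yields
\begin{equation*}
\sum_{\deg(P)\leq X}\frac{1}{|P|} \;=\; \sum_{n=1}^{X}\frac{\pi_{\A}(n)}{q^n} \;=\; \sum_{n=1}^{X}\frac{1}{n} \;+\; C_{\mathrm{err}} \;+\; O(q^{-X/2}),
\end{equation*}
where $C_{\mathrm{err}} = \sum_{n=1}^\infty (\pi_{\A}(n)/q^n - 1/n)$ is an absolutely convergent constant. Using $\sum_{n\leq X} 1/n = \log X + \gamma + O(1/X)$ collapses this to $\log X + \gamma + C_{\mathrm{err}} + O(1/X)$.

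Combining the two pieces gives $\log(\text{LHS}) = \log X + \gamma + (C_{\mathrm{err}} + C_{\mathrm{tail}}) + O(1/X)$, and exponentiating produces $\prod = e^{\gamma + C_{\mathrm{err}} + C_{\mathrm{tail}}}\, X + O(1)$. The main obstacle is therefore to verify that the additive constants from Steps 2 and 3 combine to zero, so that the stated leading coefficient is precisely $e^\gamma$. The natural way to pin this down is to use the explicit formula $\zeta_{\A}(s) = (1-q^{1-s})^{-1}$: rearranging the identity $\log \zeta_{\A}(s) = \sum_{P,k}(k|P|^{ks})^{-1} = -\log(1-q^{1-s})$ in a neighborhood of $s=1$, comparing the singular expansions on both sides against the limit definition $\gamma = \lim_{N\to\infty}(H_N - \log N)$, and matching the finite parts, forces $C_{\mathrm{err}} + C_{\mathrm{tail}} = 0$ and yields the asserted constant $e^\gamma$.
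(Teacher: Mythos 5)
The paper does not prove this lemma itself --- it is quoted from Bui and Florea \cite{BF} --- so there is no in-text argument to compare against, but your proof is correct and is the natural Mertens-style argument for $\f$. Each step is sound: the $k\geq 2$ contribution converges absolutely and truncating at degree $X$ costs $O(q^{-X})$; the Prime Polynomial Theorem converts the $k=1$ sum into $\sum_{n\leq X}1/n + C_{\mathrm{err}} + O(q^{-X/2})$; and the harmonic sum supplies $\log X + \gamma + O(1/X)$. The one non-routine point is the one you flag, that $C_{\mathrm{err}}+C_{\mathrm{tail}}=0$, and your $\zeta_{\A}$-comparison does settle it: writing
\begin{equation*}
\log\zeta_{\A}(s) \;=\; -\log\bigl(1-q^{-(s-1)}\bigr) + E(s) + T(s)
\end{equation*}
with $E,T$ holomorphic at $s=1$, $E(1)=C_{\mathrm{err}}$, $T(1)=C_{\mathrm{tail}}$, both $-\log(1-q^{1-s})$ and $-\log(1-q^{-(s-1)})$ have the same local expansion $-\log(s-1)-\log\log q+O(s-1)$, which forces $E(1)+T(1)=0$. (The appeal to $\gamma = \lim_{N\to\infty}(H_N-\log N)$ in this last step is superfluous; $\gamma$ already entered through $H_X$ and plays no role in the cancellation.) A slightly more streamlined route, which avoids introducing the two auxiliary constants at all, is to collect the double sum over pairs $(n,k)$ with $nk=m$ and $n\leq X$ and invoke the exact identity $\sum_{d\mid m}d\,\pi_{\A}(d)=q^m$ (coefficient comparison in $\log\zeta_{\A}$): the terms with $m\leq X$ reproduce $\sum_{m\leq X}1/m$ exactly, and the remainder is $O(q^{-X/2})$, so $e^\gamma$ emerges with no residual constant to chase down.
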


\begin{lemma}\label{perron} (Perron's Formula)\\
	If the power series 
	
	\begin{equation*}
	H(u)=\sum_{f \in\M} a(f) u^{\deg(f)}
	\end{equation*}
	converges absolutely for $|u|\leqslant R< 1$, then 
	
	\begin{equation*}
	\sum_{f\in\M_n} a(f)= \frac{1}{2\pi i}\oint_{|u|=R} \frac{H(u)}{u^{n+1}} du,
	\end{equation*}
	and 
	
	\begin{equation*}
	\sum_{f\in\M_{\leqslant n}} a(f)= \frac{1}{2\pi i}\oint_{|u|=R} \frac{H(u)}{(1-u)u^{n+1}} du.
	\end{equation*}
\end{lemma}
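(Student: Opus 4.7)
The plan is to deduce both identities from Cauchy's integral formula for the Taylor coefficients of a power series, once $H(u)$ has been re-indexed by degree. Since $\f$ contains only finitely many monic polynomials of any given degree, grouping the terms of $H(u) = \sum_{f\in\M} a(f)\, u^{\deg(f)}$ by degree produces a bona fide power series
$$H(u) = \sum_{d=0}^{\infty} c_d\, u^d, \qquad c_d := \sum_{f\in\M_d} a(f),$$
and the absolute-convergence hypothesis on $|u|\leqslant R$ makes the rearrangement legitimate and guarantees that the grouped series also converges on the closed disc $|u|\leqslant R$. Cauchy's formula applied on the contour $|u|=R$ then yields $c_n = \frac{1}{2\pi i} \oint_{|u|=R} H(u)\,u^{-n-1}\,du$, which is exactly the first claimed identity.

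For the second identity I would multiply $H(u)$ by the geometric series $(1-u)^{-1} = \sum_{m\geqslant 0} u^m$, which converges uniformly on $|u|=R$ because $R<1$. Taking the Cauchy product of the two convergent power series yields
$$\frac{H(u)}{1-u} = \sum_{n=0}^{\infty} \bigg(\sum_{d=0}^{n} c_d\bigg) u^n = \sum_{n=0}^{\infty} \bigg(\sum_{f\in\M_{\leqslant n}} a(f)\bigg) u^n,$$
and extracting the coefficient of $u^n$ via Cauchy's formula on the same contour gives the second stated integral representation.

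No step presents any real obstacle; the only point requiring care is the interchange of summation over $f$ with the contour integral when passing to the regrouped form of $H(u)$, and this is handled directly by the absolute-convergence hypothesis, which supplies the uniform majorant $\sum_f |a(f)|\, R^{\deg(f)} < \infty$ on the contour and permits termwise integration. The whole argument is the function-field counterpart of the classical Perron formula for Dirichlet series, with $u = q^{-s}$ and the circle $|u| = R$ replacing the vertical line $\operatorname{Re}(s) = \sigma_0$.
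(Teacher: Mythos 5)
Your argument is correct and is the standard derivation of this function-field Perron formula: regroup by degree to realize $H$ as a genuine power series, then read off coefficients by Cauchy's formula, and for the partial-sum version first multiply by the geometric series $(1-u)^{-1}$ (valid since $R<1$) and take the Cauchy product. The paper itself does not prove this lemma; it quotes it from Bui and Florea, and the proof there is the same Cauchy-coefficient argument you give, so there is no meaningful divergence in approach.
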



\section{Moments of the partial Euler product}\label{P_X}

Recall that 

\begin{equation*}
P_X(s,\x_P)= \exp\left(\sum_{\substack{f\in \M_{\leqslant X}}} \frac{\Lambda(f)\x_P(f)}{|f|^s\deg(f)}\right),
\end{equation*}
and let 

\begin{equation*}
\begin{split}
P^*_{k,X}&(s,\x_P)\\
&= \prod_{\substack{ \deg(Q)\leqslant X/2}} \left(1-\frac{\x_P(Q)}{|Q|^s}\right)^{-k} \prod_{\substack{ X/2<\deg(Q)\leqslant X}} \left(1+\frac{k\x_P(Q)}{|Q|^s}+\frac{k^2\x_P(Q)^2}{2|Q|^{2s}} \right)
\end{split}
\end{equation*}
for any $k\in\R$. Consider the following lemma \cite[Lemma 5.1]{BF}.

\begin{lemma}\label{P*&P}
	For any $k\in\R$ we have 	 
	
	\begin{equation*}
	P_X(s,\x_P)^k = P_{k,X}^*(s,\x_P) \left(1+O\left(q^{-X/6}/X\right)\right)
	\end{equation*}
	uniformly for Re$(s)=\sigma\geqslant 1/2.$
\end{lemma}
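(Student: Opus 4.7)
The plan is to expand $\log P_X(s,\x_P)^k$ over prime powers, recognize the Euler-factor structure of $P^*_{k,X}$ inside it, and estimate the discrepancy caused by the truncation $j\deg(Q)\leqslant X$. Using that $\Lambda$ is supported on prime powers with $\Lambda(Q^j)=\deg(Q)$, we have
\begin{equation*}
P_X(s,\x_P)^k = \exp\left( k \sum_{Q}\sum_{\substack{j\geqslant 1 \\ j\deg(Q)\leqslant X}} \frac{\x_P(Q)^j}{j|Q|^{js}} \right),
\end{equation*}
which I would split according to whether $\deg(Q)\leqslant X/2$ (several $j$ contribute) or $X/2<\deg(Q)\leqslant X$ (only $j=1$ does).

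For $\deg(Q)\leqslant X/2$ I would complete the inner sum to the full series by adding and subtracting $\sum_{j>X/\deg(Q)}\x_P(Q)^j/(j|Q|^{js})$; the completed sum equals $-\log(1-\x_P(Q)/|Q|^s)$, producing the first factor $\prod_{\deg(Q)\leqslant X/2}(1-\x_P(Q)/|Q|^s)^{-k}$ of $P^*_{k,X}$ together with an error
\begin{equation*}
R_1 := k\sum_{\deg(Q)\leqslant X/2}\sum_{j>X/\deg(Q)} \frac{\x_P(Q)^j}{j|Q|^{js}}.
\end{equation*}
Write $J_0:=\lfloor X/\deg(Q)\rfloor+1\geqslant 3$; the geometric inner tail is $\ll J_0^{-1} q^{-J_0 \deg(Q)\sigma}$ uniformly for $\sigma:=\mathrm{Re}(s)\geqslant 1/2$. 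I would stratify by the value of $J_0$: on the strip $X/m<\deg(Q)\leqslant X/(m-1)$ one has $J_0=m$, and combining with the Prime Polynomial Theorem $\pi_{\A}(n)\sim q^n/n$ yields per-degree contribution $\ll q^{n(1-m\sigma)}/X$, maximized at $n$ just above $X/m$ with value $\approx q^{(X/m)(1-m\sigma)}/X$. At $\sigma=1/2$ the dominant stratum is $m=3$ (primes with $X/3<\deg(Q)\leqslant X/2$), giving $|R_1|\ll q^{-X/6}/X$ uniformly in $\sigma\geqslant 1/2$.

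For $X/2<\deg(Q)\leqslant X$ only $j=1$ remains, so the exponent is $k\x_P(Q)/|Q|^s$, of size $\leqslant |k|q^{-X/4}$. I would Taylor-expand
\begin{equation*}
\exp\left(\frac{k\x_P(Q)}{|Q|^s}\right) = \left( 1+\frac{k\x_P(Q)}{|Q|^s}+\frac{k^2\x_P(Q)^2}{2|Q|^{2s}} \right)\left( 1+O_k\!\left(\frac{1}{|Q|^{3\sigma}}\right) \right),
\end{equation*}
reproducing the second factor of $P^*_{k,X}$. Taking the product over this range of primes and summing $|Q|^{-3\sigma}$ by the Prime Polynomial Theorem gives a total error of the form $1+O_k(q^{-X/4}/X)$. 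Combining both ranges proves the lemma, since $q^{-X/4}/X$ is absorbed into $q^{-X/6}/X$.

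The main technical obstacle is pinning down the exponent $1/6$ in the bound for $R_1$; a uniform estimate that treats all $\deg(Q)\leqslant X/2$ identically only produces $|R_1|\ll 1/X$. The saving comes from observing that primes with $\deg(Q)$ just above $X/3$ have first omitted index exactly $j=3$, so the tail starts at $q^{-3\sigma\deg(Q)}$ rather than the weaker $q^{-\sigma X}$; balancing this decay against the $q^{\deg(Q)}$ count of primes of that degree is precisely what produces the decisive factor $q^{-X/6}$.
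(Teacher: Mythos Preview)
Your argument is correct. The splitting into primes with $\deg(Q)\leqslant X/2$ (where you complete the $j$-sum to a full logarithm) and $X/2<\deg(Q)\leqslant X$ (where only $j=1$ survives and a second-order Taylor expansion reproduces the quadratic Euler factor) is exactly the right decomposition, and your stratified bound for $R_1$ correctly isolates the $m=3$ stratum $X/3<\deg(Q)\leqslant X/2$ as the source of the exponent $q^{-X/6}$.

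Note, however, that the present paper does not supply its own proof of this lemma: it simply quotes it as \cite[Lemma~5.1]{BF} (Bui--Florea). What you have written is precisely the standard argument behind that citation, so there is no genuine difference in approach to discuss---your proof is the one the paper is implicitly invoking.
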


Now we proceed with the proof of Theorem \ref{partial Euler product}, and write $P_{k,X}^*(s,\x_P)$ as a Dirichlet series   

\begin{equation*}
\begin{split}
\sum_{f \in\M} & \frac{\alpha_k(f)\x_P(f)}{|f|^s} \\
&= \prod_{\substack{ \deg(Q)\leqslant X/2}} \left(1-\frac{\x_P(Q)}{|Q|^s}\right)^{-k} \prod_{\substack{ X/2<\deg(Q)\leqslant X}} \left(1+\frac{k\x_P(Q)}{|Q|^s}+\frac{k^2\x_P(Q)^2}{2|Q|^{2s}} \right)
\end{split}
\end{equation*} 
with $\alpha_k(f)\in\R.$ If we denote the set of $X$-smooth polynomials by $S(X)$, that is, 
\begin{equation*}
S(X)=\left\{f\in \f : \text{ monic, } Q\mid f \to \deg(Q) \leqslant X \right\},
\end{equation*}
then $\alpha_k(f)$ is multiplicative, and vanishes when $f \notin S(X).$ We also have $0\leqslant\alpha_k(f)\leqslant d_{|k|}(f)$ for all $f\in \M$. Moreover, $\alpha_k(f)=d_k(f)$ if $f\in S(X/2),$ and $\alpha_k(Q)=k$ and $\alpha_k(Q^2)=k^2/2$ for all prime $Q\in\f$ with $X/2<\deg(Q)\leqslant X.$

Truncating the series, for $s=1/2$, at $\deg(f)\leqslant \vartheta g,$ where $\vartheta>0$ will be chosen later. Using the Prime Polynomial Theorem (\ref{PPT}) we can bound the following sum by

\begin{equation}\label{3}
\begin{split}
\sum_{\substack{f\in S(X) \\ \deg(f)>\vartheta g}} \frac{\alpha_k(f)\x_P(f)}{|f|^{1/2}} &\leqslant \sum_{f\in S(X)} \frac{d_{|k|}(f)}{|f|^{1/2}} \left(\frac{|f|}{q^{\vartheta g}}\right)^{c/4}\\
&= q^{-\vartheta g/4} \prod_{\substack{ \deg(Q)\leqslant X}}\left(1-\frac{1}{|Q|^{(2-c)/4}}\right)^{-|k|} \\
&\ll q^{-c\vartheta g/4} \exp\left(O_k\left(\sum_{\substack{Q \text{ irreducible} \\ \deg(Q)\leqslant X}}\frac{1}{|Q|^{(2-c)/4}}\right)\right) \\
&\ll q^{-c\vartheta g/4} \exp\left(O_k\left( \frac{q^{(2+c)X/4}}{X}\right)\right) \\
&\ll_{k,\varepsilon} q^{-c\vartheta g/4+\varepsilon g},
\end{split}
\end{equation}
since $X\leqslant (2-c)\log g/\log q.$ Hence we have 
\begin{equation}\label{P*}
P_{k,X}^*(\x_P) :=P_{k,X}^*\left(\tfrac{1}{2},\x_P\right) =  \sum_{\substack{f\in S(X) \\ \deg(f)\leqslant\vartheta g}} \frac{\alpha_k(f)\x_P(f)}{|f|^{1/2}} +O_{k,\varepsilon} \left(q^{-c\vartheta g/4+\varepsilon g}\right)
\end{equation}
for all $k\in\R$ and $\vartheta>0,$ and in the end we obtain that

\begin{equation*}
\begin{split}
\frac{1}{\#\p} \sum_{P\in \p}& P_X^*(\x_P)^k \\
 = & \frac{1}{\#\p}  \sum_{\substack{f\in S(X) \\ \deg(f)\leqslant\vartheta g}} \frac{\alpha_k(f)}{|f|^{1/2}} \sum_{P\in \p}\x_P(f)+O_{k,\varepsilon} \left(q^{-c\vartheta g/4+\varepsilon g}\right).
\end{split}
\end{equation*}

We consider the main term in the above equation and break it into two sums, $I_{(f\neq\Box)}$ and $I_{(f=\Box)}$ with $f\neq\Box$ and $f=\Box$ respectively. First we bound the contribution of the terms with $f\neq\Box$ by using Rudnick's inequality (\ref{chi})
\begin{equation*}
\begin{split}
I_{(f\neq\Box)} \ll q^{-g}\sum_{\substack{f\in S(X)}} \frac{d_{|k|}(f)\deg(f)}{|f|^{1/2}}.
\end{split}
\end{equation*}
Following the same argument as in equation (\ref{3}),
\begin{equation*}
\begin{split}
\sum_{\substack{f\in S(X)}} \frac{d_{|k|}(f)\deg(f)}{|f|^{1/2}} &\ll \prod_{ \deg(Q)\leqslant X} \left(1-\frac{\deg(Q)}{|Q|^{1/2}}\right)^{-|k|}\\
& \ll_k \exp\left(O_k\left( q^{\frac{X}{2}} \right) \right) \\
& \ll_{k,\varepsilon} q^{\varepsilon g}.
\end{split}
\end{equation*}
Thus
\begin{equation*}
\begin{split}
I_{(f\neq\Box)} \ll q^{-g+\varepsilon g}.
\end{split}
\end{equation*}

Now we consider the contribution of $f=\Box.$ Using Lemma \ref{expected_value} and the fact that $0\leqslant\alpha_k(f)\leqslant d_{|k|}(f)$ we can write

\begin{equation*}
\begin{split}
I_{(f=\Box)} &= \sum_{\substack{f\in S(X) \\ \deg(f)\leqslant\vartheta g/2}} \frac{\alpha_k(f^2)}{|f|}.
\end{split}
\end{equation*}
As in (\ref{3}), we can bound the sum over all $f\in S(X)$ with $\deg(f)>\vartheta g$ by $q^{-\vartheta g/4+\varepsilon g},$ and extend the sum to all $f\in S(X),$
\begin{equation*}
\begin{split}
&I_{(f=\Box)}=  \sum_{\substack{f\in S(X)}} \frac{\alpha_k(f^2)}{|f|} + O_{k,\varepsilon}\left(q^{-\vartheta g/4+\varepsilon g}\right).
\end{split}
\end{equation*}
Then, by the multiplicativity of $\alpha_k(f)$ and Lemma \ref{Mertens},
\begin{equation}\label{box}
\begin{split}
& I_{(f=\Box)} = \prod_{\substack{\deg(Q)\leqslant X}}\left(1+\sum_{j=1}^\infty \frac{\alpha_k(Q^{2j})}{|Q|^j}\right) + O_{k,\varepsilon}\left(q^{-\vartheta g/4+\varepsilon g}\right) \\
&= \prod_{\substack{\deg(Q)\leqslant X/2}}\left(1+\sum_{j=1}^\infty \frac{d_k(Q^{2j})}{|Q|^j}\right)  \prod_{\substack{X/2<\deg(Q)\leqslant X}}\left(1+ \frac{k^2}{2|Q|}+O_k\left( |Q|^{-2} \right) \right)\\
& \ \ \ \ + O_{k,\varepsilon}\left(q^{-\vartheta g/4+\varepsilon g}\right)\\
&= \left(1+ O_k\left(\frac{q^{-X/2}}{X}\right)\right) \prod_{\substack{\deg(Q)\leqslant X/2}}\left[ \left(1-\frac{1}{|Q|}\right)^{\frac{k(k+1)}{2}}\left(1+\sum_{j=1}^\infty \frac{d_k(Q^{2j})}{|Q|^j}\right)\right] 
\\
&\ \ \ \ \times \prod_{\substack{\deg(Q)\leqslant X/2}} \left(1-\frac{1}{|Q|}\right)^{-\frac{k(k+1)}{2}} \prod_{\substack{ X/2<\deg(Q)\leqslant X}}\left(1-\frac{1}{|Q|} \right)^{-\frac{k^2}{2}} + O_{k,\varepsilon}\left(q^{-\vartheta g/4+\varepsilon g}\right)\\
\end{split}
\end{equation}

\begin{equation*}
\begin{split}
&= \left(1+  O_k\left(\frac{q^{-X/2}}{X}\right)\right) \; 2^{-\frac{k}{2}} \mathcal{A}_k \left(e^\gamma X\right)^{\frac{k(k+1)}{2}} + O_{k,\varepsilon}\left(q^{-\vartheta g/4+\varepsilon g}\right).\\
&\\
\end{split}
\end{equation*}

Hence, 
\begin{equation*}
\begin{split}
\frac{1}{|\p|} \sum_{P\in\p} P^*_X(\x_P)^k = 2^{-\frac{k}{2}} \mathcal{A}_k \left(e^{\gamma}X\right)^{\frac{k(k+1)}{2}} + O_k\left(q^{-X/2}X^{\frac{k(k+1)}{2}-1}\right)
\end{split}
\end{equation*}
which finishes the proof of Theorem \ref{partial Euler product}. \;\;\;\; \; \; \;\;\;\; \; \; \;\;\;\; \; \; \;\;\;\; \; \; \;\;\;\; \; $\Box$


\section{Twisted First Moment of $L(\frac{1}{2},\x_P)$}\label{1twisted}

The aim in this section is to evaluate the first twisted moment 

\begin{equation*}
I(l;g) = \frac{1}{|\p|}\sum_{P\in\p} L\left(\tfrac{1}{2},\x_P\right)\x_P(l),
\end{equation*}
using the assumption that $l\in\M$ in $\f$ with $\deg(l)\ll g.$ Using the ``approximate" functional equation (\ref{aprox}) write

\begin{equation*}
I(l;g) = S(l;g)+S(l;g-1),
\end{equation*}
where 
\begin{equation*}
S(l;N)=  \frac{1}{|\p|} \sum_{f\in\M_{\leqslant N}} \frac{1}{|f|^{1/2}}\sum_{P\in\p}\x_P(fl)
\end{equation*}
for $N\in\{g,g-1\}$. According to whether the degree of the product $fl$ is odd or even, respectively, we have

\begin{equation*}
S(l;N)=  S^o(l;N)+S^e(l;N).
\end{equation*}
Consider $S^o(l;N)$, since $\deg(fl)$ is odd then $fl$ is not a perfect square and using Rudnick's inequality (\ref{chi}) we have the bound 

\begin{equation*}
\begin{split}
S^o(l;N) &\ll q^{-g}  \sum_{f \in \M_{\leqslant N}} \frac{1}{|f|^{1/2}}  \deg(fl)\\
&\ll q^{-g+N/2}\left(\deg(l)+N \right).
\end{split}
\end{equation*}
Therefore, 

\begin{equation*}
\begin{split}
S^o(l) &= S^o(l;g) + S^o(l;g-1)\\
& \ll q^{-\frac{g}{2}} \left(\deg(l)+g \right).
\end{split}
\end{equation*}

Now, it remains to compute the main term which comes from evaluating $S^e(l;N).$ Writing $f=f_1^2l_1$ and $l=l_1l_2^2$ where $f_1,l_1,l_2\in\M$ and $l_1$ square-free we have

\begin{equation*}
\begin{split}
S^e(l;N) &= \frac{1}{|\p|}  \sum_{\substack{f_1 \in \M\\ \deg(f_1^2l_1)\leqslant N}} \frac{1}{|f_1^2l_1|^{1/2}}   \sum_{\substack{P\in\p}}\x_P((f_1l_1l_2)^2).
\end{split}
\end{equation*}
Note that, $\x_P(f^2)=1$ for all $f\in\M$, and that the degree of $f_1l_1l_2$ might be greater than the degree of $P\in \p,$ therefore we need to divide the $S(l;N)$ into two sums 

\begin{equation*}
\begin{split}
S^e(l;N) = S^e_1(l;N) + S^e_2(l;N),
\end{split}
\end{equation*}
where

\begin{equation*}
S^e_1(l;N) =  \frac{1}{|l_1|^{1/2}} \frac{1}{|\p|}  \sum_{f_1 \in \M_{( N-\deg(l_1))/2}} \frac{1}{|f_1|} \sum_{\substack{P\in\p \\ P\nmid f_1l_1l_2 \\ \deg(f_1l_1l_2)< 2g+1}} \x_P((f_1l_1l_2)^2) 
\end{equation*}
and

\begin{equation*}
S^e_2(l;N) =  \frac{1}{|l_1|^{1/2}} \frac{1}{|\p|} \sum_{f_1 \in \M_{( N-\deg(l_1))/2}} \frac{1}{|f_1|} \sum_{\substack{P\in\p \\ P\nmid f_1l_1l_2 \\ \deg(f_1l_1l_2)\geqslant 2g+1}} \x_P((f_1l_1l_2)^2).
\end{equation*}

Applying the Prime Polynomial Theorem (\ref{PNT}) for $S_1^e(l;N)$ we have

\begin{equation*}
\begin{split}
&S^e_1(l;N)\\
&= \frac{1}{|l_1|^{1/2}} \sum_{2n\leqslant N-\deg(l_1)} \sum_{f_1 \in \M_{n}} \frac{1}{|f_1|} + O\left(\frac{q^{-g}}{|l_1|^{1/2}} \sum_{2n\leqslant N-\deg(l_1)} \sum_{f_1 \in \M_{n}} \frac{1}{|f_1|} \right).
\end{split}
\end{equation*}

Using Perron's formula [Lemma \ref{perron}], we obtain the following for the main term

\begin{equation*}
\begin{split}
&S_1^e(l;N)\\
&= \frac{1}{|l_1|^{1/2}} \frac{1}{2\pi\i}\oint_{|u|=r}  \frac{\mathcal{F}(u)}{u^{N-\deg(l_1)+1}(1-u)}du+O\left( \frac{q^{-g}}{|l_1|^{1/2}} \left[\frac{N-\deg(l_1)}{2}\right] \right)
\end{split}
\end{equation*}
for any $r<1$, where $\mathcal{F}(u)$ is multiplicative and defined by 

\begin{equation*}
\begin{split}
\mathcal{F}(u) &= \sum_{f_1 \in \M}\frac{1}{|f_1|} u^{2\deg(f)}\\
& = \mathcal{Z}\left(\frac{u^2}{q}\right)\\
&= \frac{1}{(1-u)(1+u)}.
\end{split}
\end{equation*}

Thus,
\begin{equation*}
\begin{split}
S_1^e(l;N) &=  \frac{1}{|l_1|^{1/2}} \frac{1}{2\pi\i}\oint_{|u|=r}  \frac{1}{u^{N-\deg(l_1)+1}(1-u)^2(1+u)}du\\
& \ \ \ +O\left( \frac{q^{-g}}{|l_1|^{1/2}} \left[\frac{N-\deg(l_1)}{2}\right] \right),
\end{split}
\end{equation*}
and 

\begin{equation*}
\begin{split}
S_1^e(l) &= S_1^e(l;g)+ S_1^e(l;g-1)\\
&=  \frac{1}{|l_1|^{1/2}} \frac{1}{2\pi\i}\oint_{|u|=r}  \frac{1}{u^{g-\deg(l_1)+1}(1-u)^2}du  + O\left(\frac{q^{-g}}{|l_1|^{1/2}} \left(g-\deg(l_1)\right)\right).\\
\end{split}
\end{equation*}
It is clear that the integrand's numerator has an analytic continuation to the region $|u|\leqslant R_1= q^{1-\varepsilon}.$ Therefore, we move the contour of integration to $|u|=R_1,$ encountering a pole of order $2$ at $u=1$. The integral over the contour $|u|=R_1$ is bounded by $|l_1|^{1/2} q^{-g+\varepsilon g}.$ Then, using the residue theorem we have
\begin{equation*}
\begin{split}
S_1^e(l) &= -\frac{1}{|l_1|^{1/2}} \text{ Res}\left(u=1\right) +  O\left(|l_1|^{\frac{1}{2}}q^{-g+\varepsilon g} \right)  + O\left(q^{-g}|l_1|^{-\frac{1}{2}} \left(g-\deg(l_1)\right)\right)\\
&=  \frac{1}{|l_1|^{1/2}} \left(g-\deg(l_1)+1\right) + O\left(|l_1|^{\frac{1}{2}}q^{-g+\varepsilon g} \right).
\end{split}
\end{equation*}

It remains to bound $S^e_2(l;N)$. As in (\ref{3}) we extend the sum over $P\in\p$ and using the Prime Polynomial Theorem (\ref{PPT}) we have
\begin{equation*}
\begin{split}
S^e_2(l;N) &\ll  \frac{1}{|l_1|^{1/2}} \frac{1}{|\p|} \sum_{f_1 \in \M_{( N-\deg(l_1))/2}} \frac{1}{|f_1|} \sum_{\substack{P\in\p}} \left(\frac{|f_1l_1l_2|}{q^{2g+1}}\right)^{1-\varepsilon}\\
&\ll q^{-2g+2\varepsilon g} |l_1|^{-\frac{\varepsilon}{2}} |l|^{\frac{1-\varepsilon}{2}}  \sum_{f_1 \in \M_{( N-\deg(l_1))/2}} \frac{1}{|f_1|^\varepsilon} \\
&\ll |l_1|^{-\frac{1}{2}} |l|^{\frac{1-\varepsilon}{2}} q^{-2g+2\varepsilon g} q^{(1-\varepsilon)\frac{N}{2}}.
\end{split} 
\end{equation*}
Therefore,

\begin{equation*}
\begin{split}
S^e_2(l) &= S^e_2(l;g)+S^e_2(l;g-1)\\
&\ll   |l_1|^{-\frac{1}{2}} |l|^{\frac{1-\varepsilon}{2}} q^{\frac{3}{2}(\varepsilon-1) g}.
\end{split}
\end{equation*}
Combining the results in this section we conclude the proof of Theorem \ref{1TWM}.
{\color{white}g} \;\;\;\; \; \; \;\;\;\; \; \; \;\;\;\; \; \; \;\;\;\; \; \; \;\;\;\; \; \;\;\;\; \; \;\;\;\; \; \;\;\;\; \; \;\;\;\; \; \; \;\;\;\; \; \; \;\;\;\; \; \; \;\;\;\; \; \; \;\;\;\; \; $\Box$


\section{First Moment of the Partial Hadamard Product}\label{FTM}

\subsection{Random Matrix Theory Model}
$\color{white}gf$\\
From (\ref{Z_X(s)}) recall that 
\begin{equation*}
Z_X(s,\x_P)=\exp\left(-\sum_{\rho}U\left(\left(s-\rho\right)X\right)\right),
\end{equation*}
where 
\begin{equation*}
U(z)\int_0^\infty u(x) E_1\left(z\log x\right)dx.
\end{equation*}
Denote the zeros by $\rho=1/2+i\gamma.$ Since $E_1(-ix)+E_1(ix)=-2$Ci$\left(|x|\right)$ for $x\in \R$, where Ci$(z)$ is the cosine integral,
\begin{equation*}
\text{Ci}(z)=-\int_z^\infty \frac{\cos(x)}{x}dx,
\end{equation*}
then we have 
\begin{equation}\label{23}
\left<Z_X(\x_P)^k\right>_{\p}= \left< \prod_{\gamma>0} \exp\left(2k\int_0^\infty u(x)\text{Ci}\left(\gamma X\left(\log x\right)\right)dx\right)\right>_{\p}.
\end{equation}
We model the right hand side of (\ref{23}) by replacing the ordinates $\gamma$ by the eigenangles of a $2g\times 2g$ symplectic unitary matrix and averaging over all such matrices with respect to the Haar measure. The $k^{th}$-moment of $Z_X(\x_P)$ is thus expected to be asymptotic to 
\begin{equation*}
\mathbb{E}_{2g}\left[\prod_{n=1}^g \exp\left(2k\int_0^\infty u(x)\text{Ci}\left(\theta X\left(\log x\right)\right)dx\right)\right],
\end{equation*}
where $\pm\theta_n$ with $0\leqslant\theta_1\leqslant\cdots\leqslant\theta_g\leqslant\pi$ are the $2g$ eigenangles of the random matrix and $\mathbb{E}_{2g}[\cdot]$ denotes the expectation with respect to the Haar measure. It is convenient to have our function periodic, therefore we instead consider
\begin{equation}\label{24}
\mathbb{E}_{2g}\left[\prod_{n=1}^g \phi\left(\theta_n\right)\right]
\end{equation}
where
\begin{equation*}
\begin{split}
\phi(\theta) &=\exp\left(2k \int_0^\infty u(x) \left(\sum_{j=-\infty}^\infty\text{Ci}\left(|\theta+2\pi j|X\left(\log x\right)\right)\right)dx\right)\\
&= \left|2\sin\frac{\theta}{2}\right|^{2k}\exp\Bigg(2k \int_0^\infty u(x) \left(\sum_{j=-\infty}^\infty\text{Ci}\left(|\theta+2\pi j|X\left(\log x\right)\right)\right)dx \\
& \ \ \   -2k\log\left|2\log\sin\frac{\theta}{2}\right|\Bigg).
\end{split}
\end{equation*}
The average (\ref{24}) over the symplectic group has been asymptotically evaluated in \cite{DIK} and so we have 
\begin{equation*}
\mathbb{E}_{2g}\left[\prod_{n=1}^g\phi(\theta_n)\right]\sim \frac{G(k+1)\sqrt{\Gamma(k+1)}}{\sqrt{G(2k+1)\Gamma(2k+1)}} \left(\frac{2g}{e^\gamma X}\right)^{k(k+1)/2}.
\end{equation*}

\subsection{Proof of Theorem \ref{Z_X}}
$\color{white}gf$\\

Now we are in a position to evaluate the first moment of $Z_X(\x_P)$. From Theorem \ref{1TWM}

\begin{equation*}
\begin{split}
\left< L\left(\tfrac{1}{2},\x_P\right)\x_P(l)\right>_{\p} =& \frac{1}{|l_1|^{1/2}} g + O\left(\frac{\deg(l_1)}{|l_1|^{1/2}}\right) + O\left(  |l_1|^{-\frac{1}{2}} |l|^{\frac{1-\varepsilon}{2}} q^{\frac{3}{2}(\varepsilon-1) g} \right).
\end{split}
\end{equation*}
Combining with (\ref{P*}) we get

\begin{equation*}
\begin{split}
\left< L\left(\tfrac{1}{2},\x_P\right) P_X^{* -1}(\x_P) \right>_{\p} 
=& J_1 + J_2 + O\left(q^{\vartheta g - \frac{3}{2}g +\frac{3}{2}\varepsilon g} \right) + O\left(q^{-c\vartheta g/4+\varepsilon g}\right).
\end{split}
\end{equation*}
for any $\vartheta>0$, where 

\begin{equation*}
J_1 = g \sum_{\substack{l\in S(X) \\ \deg(l)\leqslant\vartheta g}} \frac{\alpha_{-1}(l)}{|l|^{1/2}} \frac{1}{|l_1|^{1/2}} 
\end{equation*}
and 
\begin{equation*}
J_2 = \sum_{\substack{l\in S(X) \\ \deg(l)\leqslant\vartheta g}} \frac{\alpha_{-1}(l)}{|l|^{1/2}} \frac{1}{|l_1|^{1/2}} \deg(l_1). 
\end{equation*}
Thus Theorem \ref{Z_X} follows from evaluating $J_1$ and bounding $J_2$ and choosing any $0<\vartheta<3/2$. 

\subsubsection{Evaluating $J_1$} 
\begin{equation}\label{J1}
\begin{split}
J_1 &=  g \sum_{\substack{l_1,l_2\in S(X) \\ l_1 \text{ square-free} \\\deg(l_1)+2\deg(l_2) \leqslant\vartheta g}} \frac{\alpha_{-1}(l_1l_2^2)}{|l_1||l_2|}.
\end{split}
\end{equation}

Recall that the function $\alpha_{-1}(l)$ is given by 

\begin{equation} \label{alpha -1}
\begin{split}
\sum_{\substack{l \text{ monic}\\ \deg(l)= n}} &\frac{\alpha_{-1}(l) \x_P(l)}{l^s}\\
&  = \prod_{\substack{Q \text{ irreducible} \\ \deg(Q)\leqslant X/2}} \left(1-\frac{\x_P(Q)}{|Q|^s}\right) \prod_{\substack{Q \text{ irreducible} \\ X/2<\deg(Q)\leqslant X}} \left(1- \frac{\x_P(Q)}{|Q|^s}+\frac{\x_P(Q)^2}{2|Q|^{2s}}\right).  
\end{split}
\end{equation}
So $\alpha_{-1}(l)$ is supported on square-free polynomials. Thus, if we let 

\begin{equation*}
\mathcal{Q}_X = \prod_{\substack{Q \text{ irreducible} \\ \deg(Q)\leqslant X}} Q,
\end{equation*}
then for the sum over $l_1,l_2$ in (\ref{J1}) we can write $l_1=l_1'l_3$ and $l_2=l_2'l_3$, where $l_1',l_2',l_3$ are all square-free, that is, $l_1',l_2',l_3|\mathcal{Q}_X,$ and $l_1',l_2',l_3$ are pairwise co-prime. Thus

\begin{equation*}
\begin{split}
J_1 = g &\sum_{\substack{l_3|\mathcal{Q}_X \\ deg(l_3)\leqslant \vartheta g/3}} \frac{\alpha_{-1}(l_3^3)}{|l_3|^2} \sum_{\substack{l_2|(\mathcal{Q}_X/l_3) \\ deg(l_2)\leqslant (\vartheta g-3\deg(l_3))/2}} \frac{\alpha_{-1}(l_2^2)}{|l_3|} \\
& \times \sum_{\substack{l_1|(\mathcal{Q}_X/l_2l_3) \\ deg(l_1)\leqslant \vartheta g- 2 \deg(l_2)-3\deg(l_3)}} \frac{\alpha_{-1}(l_1)}{|l_1|}.
\end{split}
\end{equation*}

As in (\ref{3}) we can remove the condition $\deg(l_1)+2\deg(l_2)+3\deg(l_3)\leqslant \vartheta g$ at the cost of an error of size $O_\varepsilon\left(q^{-\vartheta g/2+\varepsilon g}\right)$. Now we define the following multiplicative functions

\begin{equation*}
\begin{split}
T_1(f) = \sum_{l|f} \frac{\alpha_{-1}(l)}{|l|}, \; T_2(f) = \sum_{l|f} \frac{\alpha_{-1}(l^2)}{|l|T_1(l)} \text{ and } T_3(f) = \sum_{l|f} \frac{\alpha_{-1}(l^3)}{|l|^2T_1(l)T_2(l)}.
\end{split}
\end{equation*}
Then

\begin{equation*}
J_1 = g \prod_{\substack{Q \text{ irreducible} \\ \deg(Q)\leqslant X}} \left(1 + \frac{\alpha_{-1}(Q)}{|Q|} +\frac{\alpha_{-1}(Q^2)}{|Q|} + \frac{\alpha_{-1}(Q^3)}{|Q|^2}\right).
\end{equation*}

We note from (\ref{alpha -1}) that $\alpha_{-1}(Q)=-1$ and when $\deg(Q)\leqslant X/2$, we have $\alpha_{-1}(Q^2)=0$ and $\alpha_{-1}(Q^3)=0$ and when $x/2< \deg(Q)\leqslant X$  we have $\alpha_{-1}(Q^2)=1/2$ and $\alpha_{-1}(Q^3)=0$. Thus,

\begin{equation*}
\begin{split}
J_1 & = g \prod_{\substack{Q \text{ irreducible} \\ \deg(Q)\leqslant X/2}} \left(1 - \frac{1}{|Q|}\right)  \prod_{\substack{Q \text{ irreducible} \\ X/2<\deg(Q)\leqslant X}} \left(1 - \frac{1}{|Q|} +\frac{1}{2|Q|^2} \right)\\
&= g \frac{1}{e^{\gamma}X/2}  + O\left(g X^{-2}\right) 
\end{split}
\end{equation*}

\subsubsection{Bounding $J_2$}

We have that

\begin{equation}\label{j_2}
\begin{split}
J_2 &\ll  \sum_{\substack{l_1,l_2 \in S(X) }} \frac{\alpha_{-1}(l_1l_2^2)}{|l_1l_2^2|^{1/2}} \frac{1}{|l_1|^{1/2}} \deg(l_1)\\
& \ll \sum_{\substack{l_1 \in S(X)}} \frac{1}{|l_1|} \deg(l_1) \sum_{\substack{l_2 \in S(X)}} \frac{1}{|l_2|}, 
\end{split}
\end{equation}
since $0\leqslant\alpha_k(f)\leqslant T_{|k|}(f).$ Let 

\begin{equation*}
\begin{split}
F(\sigma) &= \sum_{l\in S(X)} \frac{1}{|l|^\sigma}\\
&= \prod_{\substack{Q \text{ irreducible} \\ \deg(Q)\le X}} \left(\sum_{j=0}^\infty \frac{1}{|Q|^{j\sigma}}\right).
\end{split}
\end{equation*}
Then by Merten's Theorem [Lemma \ref{Mertens}]
$$F(1) \asymp \prod_{\substack{Q \text{ irreducible} \\ \deg(Q)\leqslant X}} \left(1-\frac{1}{|P|}\right)^{-1} \asymp X,$$
and 

\begin{equation*}
\begin{split}
F'(\sigma) &= -\sum_{l\in S(X)} \frac{1}{|l_1|} \deg(l_1)  \\
& = F(\sigma)\log q \sum_{\substack{Q \text{ irreducible} \\ \deg(Q)\leqslant X}} \frac{\sum_{j=0}^\infty j \deg(Q)/|Q|^{j\sigma}}{\sum_{j=0}^\infty1/|Q|^{j\sigma}}. 
\end{split}
\end{equation*}
We note that the sum over $l_1$ in (\ref{j_2}) is  

\begin{equation*}
\begin{split}
-\frac{F(1)}{\log q} &= F(1) \sum_{\substack{Q \text{ irreducible} \\ \deg(Q)\leqslant X}} \frac{\deg(Q)}{1-|Q|},
\end{split}
\end{equation*}
and hence it is

\begin{equation*}
\begin{split}
&\ll X \sum_{\substack{Q \text{ irreducible} \\ \deg(Q)\leqslant X}} \frac{\deg(Q)}{|Q|}\\
& \ll X^2.
\end{split}
\end{equation*}

We can see that the sum over $l_2$ in (\ref{j_2}) is $F(1)$ and therefore is bounded by $X$. Hence

\begin{equation*}
J_2 \ll X^3
\end{equation*} 
and this concludes the proof. \;\;\;\;\;\;\;\;\;\;\;\;\; \; \;\;\;\; \; \; \;\;\;\; \; \; \;\;\;\; \; \; \;\;\;\; \; \; \;\;\;\; \; $\Box$


\vspace{0.5cm}

\noindent \textbf{Acknowledgment:} The first author is grateful to the Leverhulme Trust (RPG-2017-320) for the support through the research project grant “Moments of L-functions in Function Fields and Random Matrix Theory”. The second author was supported by a Ph.D. scholarship from the government of Kuwait.


\end{document}